\newtheorem{thm}{Theorem}
\newtheorem{cor}[thm]{Corollary}
\newtheorem{lem}[thm]{Lemma}
\newtheorem{prop}[thm]{Proposition}
\theoremstyle{definition}
\newtheorem{defn}{Definition}
\theoremstyle{remark}
\newtheorem{rem}{Remark}
\numberwithin{equation}{section}
\numberwithin{thm}{section}
\DeclareMathOperator{\V}{V}
\DeclareMathOperator{\D}{d}
\newcommand{\e}{e}
\DeclareMathOperator{\h}{\chi}
\newcommand{\abs}[1]{\left\vert#1\right\vert}
\newcommand{\set}[1]{\left\{#1\right\}}
\begin{document}

\title[$\delta_k$-small sets in graphs]
 {$\delta_k$-small sets in graphs}

\author{Asen Bojilov}

\address{Faculty of Mathematics and Informatics,
University of Sofia, Bulgaria}

\email{bojilov@fmi.uni-sofia.bg}

\author{Nedyalko Nenov}

\address{Faculty of Mathematics and Informatics,
University of Sofia, Bulgaria}

\email{nenov@fmi.uni-sofia.bg}

\thanks{This work was supported by the Scientific Research Fund
of the St. Kliment Ohridski Sofia University 2013.}

\subjclass[2000]{Primary 05C35}

\keywords{clique number,degree sequence}


\dedicatory{}



\begin{abstract}
Let $G$ be a simple $n$-vertex graph and $W\subseteq\V(G)$. We say that
$W$ is a $\delta_k$-small set if
$$
\sqrt[k]{\frac{\sum_{v\in W}d^k(v)}{\abs W}}\leq n-\abs W.
$$
Let $\varphi^{(k)}(G)$ denote the smallest natural number $r$ such that
$\V(G)$ decomposes into $r$ $\delta_k$-small sets, and let $\alpha^{(k)}(G)$
denote the maximal number of vertices in a $\delta_k$-small set of $G$.
In this paper we obtain bounds for $\alpha^{(k)}(G)$ and $\varphi^{(k)}(G)$.
Since $\varphi^{(k)}(G)\leq\omega(G)\leq\chi(G)$ and $\alpha(G)\leq\alpha^{(k)}(G)$,
we obtain also bounds for the clique number $\omega(G)$, the chromatic number $\chi(G)$ and
the independence number $\alpha(G)$.
\end{abstract}

\maketitle

\section[Introduction]{Introduction}

We consider only finite, non-oriented graphs without loops and
multiple edges. We shall use the following notations:

$\V(G)$ -- the vertex set of $G$;

$\e(G)$ -- the number of edges of $G$;

$\omega(G)$ -- the clique number of $G$;

$\chi(G)$ -- the chromatic number of $G$;

$d(v)$ -- the degree of a vertex $v$;

$\Delta(G)$ -- the maximal degree of $G$;

$\delta(G)$ -- the minimal degree of $G$.

All undefined notation are from \cite{west01}.

\begin{defn}\label{defn1.1}
Let $G$ be an $n$-vertex graph and $W\subseteq \V(G)$. We say that
$W$ is a \emph{small set} in the graph $G$ if
$$
d(v)\leq n-\abs{W},\text{ for all }v\in W.
$$
With $\varphi(G)$ we denote the smallest natural number $r$ such that
$\V(G)$ decomposes into $r$ small sets.
\end{defn}

$\varphi(G)$ is defined for the first time in \cite{nen06}. Some properties of
$\varphi(G)$ are proved in \cite{nen06} and \cite{bn12}.
Further $\varphi(G)$ is more thoroughly investigated in
\cite{bcan}. There an effective algorithm for the
calculation of $\varphi(G)$ is given. First of all let us note the following bounds for $\varphi(G)$.

\begin{prop}[\cite{bcan}]\label{prop1.1}
$$
\left\lceil\frac n{n-\D_1(G)}\right\rceil\leq\varphi(G)\leq
\left\lceil\frac n{n-\Delta(G)}\right\rceil,
$$
where $\D_1(G)$ is the average degree of the graph $G$.
\end{prop}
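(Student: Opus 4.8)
The plan is to prove the two inequalities separately, both by exhibiting or counting small sets, using the elementary fact that in a decomposition into small sets each part must individually satisfy the degree constraint of Definition \ref{defn1.1}.

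For the \emph{upper bound}, I would show directly that $\V(G)$ decomposes into $r := \lceil n/(n-\Delta(G))\rceil$ small sets. The natural thing is a greedy/equitable partition: split $\V(G)$ into $r$ blocks each of size at most $\lceil n/r \rceil \le n - \Delta(G)$ (this size estimate is exactly what the choice of $r$ guarantees). Then for any vertex $v$ in a block $W$ we have $d(v) \le \Delta(G) \le n - |W|$, so $W$ is small. Hence $\varphi(G) \le r$. The only thing to check carefully is the arithmetic that $r$ blocks of nearly equal size have size $\le n-\Delta(G)$; this follows since $r \ge n/(n-\Delta(G))$ forces $n/r \le n-\Delta(G)$, and the ceiling of a quantity that is at most an integer is still at most that integer when the blocks are chosen to differ in size by at most one.

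For the \emph{lower bound}, suppose $\V(G) = W_1 \cup \dots \cup W_r$ is a decomposition into $r$ small sets, with $r = \varphi(G)$. For each $i$ and each $v \in W_i$ we have $d(v) \le n - |W_i|$, so summing over $v \in W_i$ gives $\sum_{v \in W_i} d(v) \le |W_i|(n - |W_i|)$. Summing over $i$ and using $\sum_i |W_i| = n$ together with $2\e(G) = \sum_{v} d(v)$, I get $2\e(G) \le \sum_i |W_i|(n-|W_i|) = n^2 - \sum_i |W_i|^2 \le n^2 - n^2/r$ by the Cauchy--Schwarz (power-mean) inequality $\sum_i |W_i|^2 \ge (\sum_i |W_i|)^2/r$. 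Rearranging, $r \ge n^2/(n^2 - 2\e(G)) = n/(n - 2\e(G)/n) = n/(n - \D_1(G))$, and since $r$ is an integer this yields $r \ge \lceil n/(n-\D_1(G)) \rceil$.

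The main obstacle is essentially bookkeeping rather than a genuine difficulty: one must make sure the equitable partition in the upper bound really can be realized with all block sizes $\le n - \Delta(G)$ (degenerate cases such as $\Delta(G) = n-1$, where the bound reads $\varphi(G) \le n$, should be noted), and one must invoke the convexity/Cauchy--Schwarz step cleanly in the lower bound. Neither step requires any structural graph theory beyond the definition of a small set, so I expect the proof to be short.
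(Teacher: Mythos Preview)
The paper does not prove Proposition~\ref{prop1.1} at all; it is quoted from \cite{bcan} without argument, so there is no ``paper's own proof'' to match. Your argument is correct on both sides: the equitable partition into $r=\lceil n/(n-\Delta(G))\rceil$ blocks of size at most $\lceil n/r\rceil\le n-\Delta(G)$ gives the upper bound, and the Cauchy--Schwarz step $\sum_i |W_i|^2\ge n^2/r$ yields the lower bound exactly as you wrote.

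It is worth noting that your lower-bound computation is precisely the $k=1$ instance of the paper's Theorem~\ref{thm3.2}: writing $\beta_i=1-|W_i|/n$ one has $\sum_i(1-\beta_i)\beta_i=(r-1)-\sum_i\beta_i^2\le(r-1)-(r-1)^2/r=(r-1)/r$, which is Lemma~\ref{lm3.1} for $k=1$ and is equivalent to your Cauchy--Schwarz inequality. So while the paper does not supply a proof of Proposition~\ref{prop1.1}, its later machinery reproduces your lower-bound argument as a special case; your upper-bound construction, on the other hand, has no counterpart in the present paper.
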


Let $G$ be a graph and $W\subseteq\V(G)$. We define
$$
\D_k(W)=\sqrt[k]{\frac{\sum\limits_{v\in W}d^k(v)}{\abs W}},\qquad
\D_k(G)=\D_k\big(\V(G)\big).
$$

\begin{defn}\label{defn1.2}
Let $G$ be an $n$-vertex graph and $W\subseteq\V(G)$. We say that $W$ is
a \emph{$\delta_k$-small set} of $G$ if
$$
\D_k(W)\leq n-\abs W.
$$
With $\varphi^{(k)}(G)$ we denote the minimal number of $\delta_k$-sets of $G$ into which
$\V(G)$ decomposes.
\end{defn}

\begin{rem}\label{rem1.1}
$\delta_1$-small sets are defined in \cite{bcan} as $\beta$-small sets and
$\varphi^{(1)}(G)$ is denoted by $\varphi^\beta(G)$. Also in \cite{bcan}
it is proven
\end{rem}

\begin{prop}[\cite{bcan}]\label{prop1.2}
$$
\varphi^{(1)}(G)\geq\left\lceil\frac n{n-\D_1(G)}\right\rceil.
$$
\end{prop}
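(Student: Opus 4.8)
The plan is to use the defining inequality of a $\delta_1$-small set in an aggregated form, together with the elementary convexity bound $\sum_i x_i^2 \ge \frac1r\big(\sum_i x_i\big)^2$. First I would record that, for $k=1$, a set $W\subseteq\V(G)$ is $\delta_1$-small exactly when $\sum_{v\in W} d(v)\le \abs W\,(n-\abs W)$, since $\D_1(W)=\big(\sum_{v\in W} d(v)\big)\big/\abs W$ and the condition $\D_1(W)\le n-\abs W$ may be cleared of its denominator. Next, fix a decomposition $\V(G)=W_1\cup\cdots\cup W_r$ into $r=\varphi^{(1)}(G)$ pairwise disjoint $\delta_1$-small sets, all nonempty by minimality of $r$ so that each $\D_1(W_i)$ is defined.

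Applying the inequality above to each $W_i$ and summing over $i$ gives
$$
\sum_{v\in\V(G)} d(v)\;=\;\sum_{i=1}^r\sum_{v\in W_i} d(v)\;\le\;\sum_{i=1}^r \abs{W_i}\,(n-\abs{W_i})\;=\;n^2-\sum_{i=1}^r\abs{W_i}^2 ,
$$
where the last equality uses $\sum_{i=1}^r\abs{W_i}=n$. The left-hand side equals $2\e(G)=n\,\D_1(G)$, and Cauchy--Schwarz yields $\sum_{i=1}^r\abs{W_i}^2\ge\frac1r\big(\sum_{i=1}^r\abs{W_i}\big)^2=\frac{n^2}{r}$. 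Combining these, $n\,\D_1(G)\le n^2-n^2/r$, and after dividing by $n$ and rearranging, $r\,(n-\D_1(G))\ge n$. Since $\D_1(G)\le\Delta(G)\le n-1<n$, the quantity $n-\D_1(G)$ is positive, so $r\ge n/(n-\D_1(G))$; as $r$ is a natural number, $r=\varphi^{(1)}(G)\ge\big\lceil n/(n-\D_1(G))\big\rceil$, which is the assertion.

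There is no genuinely hard step: the whole argument is a double count of the degree sum over the parts of an optimal decomposition, finished off by a one-line convexity estimate. The only points that need (minor) attention are the direction of the convexity inequality and the observation that $n-\D_1(G)>0$, which is what makes the passage to the ceiling legitimate.
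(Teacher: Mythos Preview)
Your proof is correct and is essentially the $k=1$ specialization of the paper's own route to this inequality: the paper does not prove Proposition~\ref{prop1.2} directly (it is cited from \cite{bcan}), but recovers it as the case $k=1$ of Theorem~\ref{thm3.2}, whose proof sets $\beta_i=1-n_i/n$ and invokes Lemma~\ref{lm3.1}; for $k=1$ that lemma reduces to $\sum\beta_i^2\ge(\sum\beta_i)^2/r$, which is precisely your Cauchy--Schwarz step.
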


Further we shall need the following

\begin{prop}\label{prop1.3}
Let $G$ be an $n$-vertex graph. Then
\begin{enumerate}[{\rm (i)}]
\item
Every small set of $G$ is a $\delta_k$-small set of $G$ for all natural $k$.
\item
Every $\delta_{k-1}$-small set of $G$ is a $\delta_k$-small set of $G$.
\end{enumerate}
\end{prop}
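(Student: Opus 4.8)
The plan is to handle both parts through the elementary theory of power means applied to the degree list of $W$, since $\D_k(W)$ is exactly the power mean of order $k$ of the numbers $\set{d(v):v\in W}$. For part (i), suppose $W$ is a small set, so that $d(v)\le n-\abs W$ for every $v\in W$. Because a power mean of finitely many nonnegative reals never exceeds their maximum, I would write $\D_k(W)\le\max_{v\in W}d(v)\le n-\abs W$, valid for every natural $k$ and using only $n-\abs W\ge0$ (which holds since $W\subseteq\V(G)$ forces $\abs W\le n$). Hence $W$ is a $\delta_k$-small set for all $k$ simultaneously. This is a one-line estimate once the "power mean $\le$ maximum" bound is granted, and it is the cleaner of the two parts.

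For part (ii), let $W$ be a $\delta_{k-1}$-small set, i.e. $\D_{k-1}(W)\le n-\abs W$; the target is the $\delta_k$-small condition $\D_k(W)\le n-\abs W$. The natural engine is the power-mean inequality relating consecutive orders on the degree list of $W$, namely $\D_{k-1}(W)\le\D_k(W)$, which I would derive from Jensen's inequality applied to the convex map $t\mapsto t^{k/(k-1)}$ (equivalently from H\"older), disposing of the degenerate situations $\abs W=1$, a constant degree list where equality holds throughout, and vertices with $d(v)=0$ along the way. The plan is then to pass from the hypothesis on $\D_{k-1}(W)$ to the required bound on $\D_k(W)$ by combining this comparison with the common threshold $n-\abs W$, after which the inclusion of one class of small sets in the other yields at once the corresponding inequality between the decomposition numbers $\varphi^{(k-1)}(G)$ and $\varphi^{(k)}(G)$.

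The main obstacle is precisely the interaction between the power-mean comparison and the threshold $n-\abs W$: the inequality $\D_{k-1}(W)\le\D_k(W)$ controls the lower-order mean by the higher-order one, so reconciling it with the hypothesis $\D_{k-1}(W)\le n-\abs W$ in order to reach $\D_k(W)\le n-\abs W$ is the crux and must be handled with care. I would therefore pin down the equality case of the power-mean inequality (a constant degree list on $W$), verify that the boundary values $\abs W=n$ and $d(v)=0$ cause no trouble, and only then read off the relation between the two notions of small set and the resulting monotonicity of $\varphi^{(k)}(G)$ in $k$.
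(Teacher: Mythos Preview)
Your treatment of part~(i) is correct and matches the paper's argument exactly: from $d(v)\le n-\abs W$ for all $v\in W$ one gets $\D_k(W)\le\max_{v\in W}d(v)\le n-\abs W$.

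For part~(ii), however, there is a genuine gap, and it stems from a misprint in the paper's statement. You correctly recorded the power-mean inequality in the direction $\D_{k-1}(W)\le\D_k(W)$, and you correctly noticed that this direction does \emph{not} carry the hypothesis $\D_{k-1}(W)\le n-\abs W$ to the conclusion $\D_k(W)\le n-\abs W$. But the resolution is not to ``handle the crux with care'' via equality cases: the obstacle is real, and the statement as literally printed is false. For instance, in a $10$-vertex graph take $W=\{v_1,v_2,v_3\}$ with $d(v_1)=d(v_2)=6$ and $d(v_3)=9$; then $n-\abs W=7$, $\D_1(W)=7$, but $\D_2(W)=\sqrt{51}>7$, so $W$ is $\delta_1$-small yet not $\delta_2$-small.

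The intended assertion, the one the paper actually proves and the one needed for Proposition~\ref{prop1.4} and Proposition~\ref{prop5.1}, is the reverse containment: every $\delta_k$-small set is a $\delta_{k-1}$-small set. That follows in one line from $\D_{k-1}(W)\le\D_k(W)\le n-\abs W$, and it gives $\varphi^{(k-1)}(G)\le\varphi^{(k)}(G)$ (a partition into $\delta_k$-small sets is automatically a partition into $\delta_{k-1}$-small sets). Note that your proposed inclusion would yield $\varphi^{(k)}(G)\le\varphi^{(k-1)}(G)$, contradicting the chain in Proposition~\ref{prop1.4}. So rather than trying to overcome the obstacle, you should recognise it as a signal that the indices in the printed statement of~(ii) are swapped.
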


\begin{proof}
Let $W$ be a small set of $G$. Then $d(v)\leq n-\abs W$, $\forall v\in W$. Therefore
$\D_k(W)\leq n-\abs W$, i.\,e. $W$ is a $\delta_k$-small set.

The statement in (ii) follows from the inequality
$\D_{k-1}(W)\leq\D_k(W)$ (cf. \cite{hl34,kh90}).
\end{proof}

Let us note that if $G$ is an $r$-regular graph then
$\D_k(W)=r$ for all natural $k$. So, in this case,
every $\delta_k$-set of $G$ is a small set of $G$.

In this paper we shall prove that for a given graph $G$ and for
sufficiently large natural $k$ every $\delta_k$-small set of $G$
is a small set of $G$ (Theorem~\ref{thm2.1}).

\begin{prop}\label{prop1.4}
Let $G$ be a graph. Then
$$
\varphi^{(1)}(G)\leq\varphi^{(2)}(G)\leq\cdots\leq\varphi^{(k)}(G)
\leq\cdots\leq\varphi(G)\leq\omega(G)\leq\chi(G).
$$
\end{prop}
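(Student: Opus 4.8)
The plan is to read the whole chain as a statement about \emph{nested families of admissible sets}, followed by two classical graph inequalities. For each $k$ let $\mathcal{F}_k$ denote the family of $\delta_k$-small sets of $G$, and let $\mathcal{S}$ denote the family of small sets. Since $\varphi^{(k)}(G)$ (resp.\ $\varphi(G)$) is by definition the least number of parts in a decomposition of $\V(G)$ into members of $\mathcal{F}_k$ (resp.\ $\mathcal{S}$), the monotonicity of these numbers follows purely formally from inclusions among the families: if one family is contained in another, then any decomposition witnessing the smaller family is a decomposition for the larger one, so the decomposition number for the larger family is no bigger. Thus the first part of the chain reduces to identifying the correct inclusions, and only the last two links $\varphi(G)\le\omega(G)\le\chi(G)$ require genuine graph theory.

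First I would establish the inclusions $\mathcal{S}\subseteq\mathcal{F}_k\subseteq\mathcal{F}_{k-1}$. The first is exactly Proposition~\ref{prop1.3}(i). For the second I would use the power-mean monotonicity $\D_{k-1}(W)\le\D_k(W)$ already invoked in the proof of Proposition~\ref{prop1.3}: if $W\in\mathcal{F}_k$, that is $\D_k(W)\le n-\abs W$, then $\D_{k-1}(W)\le\D_k(W)\le n-\abs W$, so $W\in\mathcal{F}_{k-1}$. In words, raising $k$ only tightens the defining inequality, so the families shrink as $k$ grows. Feeding these inclusions into the formal principle above gives $\varphi^{(k-1)}(G)\le\varphi^{(k)}(G)$ for every $k\ge2$ and $\varphi^{(k)}(G)\le\varphi(G)$ for every $k$, that is,
$$
\varphi^{(1)}(G)\le\varphi^{(2)}(G)\le\cdots\le\varphi^{(k)}(G)\le\cdots\le\varphi(G).
$$
I want to stress the direction: because $\D_k(W)$ is \emph{increasing} in $k$, the admissible families $\mathcal{F}_k$ are \emph{decreasing}, hence the decomposition numbers are \emph{increasing} in $k$, all bounded above by $\varphi(G)$ (the smallest, most restrictive family $\mathcal{S}$).

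The link $\omega(G)\le\chi(G)$ is standard: in any proper colouring the vertices of a maximum clique receive pairwise distinct colours, so at least $\omega(G)$ colours are needed. The remaining link $\varphi(G)\le\omega(G)$ is the substantive one and is where I expect the real work to sit. A cheap argument yields only the weaker bound $\varphi(G)\le\chi(G)$: every independent set $I$ is a small set, since each $v\in I$ has all its neighbours outside $I$ and hence $d(v)\le n-\abs I$, so a proper colouring into $\chi(G)$ classes is automatically a decomposition into $\chi(G)$ small sets. Improving $\chi(G)$ to $\omega(G)$ cannot be done with independent sets alone, because small sets need not be independent; one must exploit this extra freedom to merge parts down to $\omega(G)$ pieces.

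For that final inequality I would either cite it from the earlier study of $\varphi$ in \cite{nen06,bcan}, where $\varphi(G)\le\omega(G)$ is established, or reprove it by a careful partition argument: order $\V(G)$ by non-increasing degree and fill small sets greedily, a set opened by a vertex of degree $\delta$ then having capacity $n-\delta$ (adding a later, lower-degree vertex leaves the maximum degree of the set unchanged), and finally bound the number of sets that must be opened by $\omega(G)$. The hard part will be this last step of bounding the number of greedily-opened small sets by the clique number, since the naive count is governed by the degree sequence rather than directly by $\omega(G)$; closing that gap is exactly the content of the cited result.
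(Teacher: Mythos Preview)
Your proposal is correct and follows essentially the same approach as the paper: the chain $\varphi^{(1)}\le\cdots\le\varphi^{(k)}\le\cdots\le\varphi$ is derived from the inclusions $\mathcal S\subseteq\mathcal F_k\subseteq\mathcal F_{k-1}$ supplied by Proposition~\ref{prop1.3}, the inequality $\omega(G)\le\chi(G)$ is dismissed as obvious, and $\varphi(G)\le\omega(G)$ is cited from \cite{nen06} (and \cite{bcan}) rather than reproved. Your explicit framing via nested families and the extra remark that independent sets are small (yielding $\varphi(G)\le\chi(G)$ directly) are elaborations the paper omits, but the logical skeleton is the same.
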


\begin{proof}
The inequality $\chi(G)\geq\omega(G)$ is obvious. The inequality
$\varphi(G)\leq\omega(G)$ is proven in \cite{nen06} (see also \cite{bcan}).
The inequality $\varphi^{(k)}(G)\leq\varphi(G)$ follows from
Proposition~\ref{prop1.3}~{(i)} and the inequlity
$\varphi^{(k-1)}(G)\leq\varphi^{(k)}(G)$ follows from
Proposition~\ref{prop1.3}~{(ii)}.
\end{proof}

According to Proposition \ref{prop1.4} every lower bound for
$\varphi^{(k)}(G)$ is a lower bound for $\varphi(G)$, $\omega(G)$
and $\h(G)$. In this paper we shall obtain a lower bound for
$\varphi^{(k)}(G)$ (Theorem~\ref{thm3.2}) from which we shall derive
new lower bounds for $\varphi(G)$, $\omega(G)$ and $\h(G)$.
As a corollary we shall get and some results
for $\varphi(G)$, $\omega(G)$ and $\chi(G)$ already from \cite{bcan} and \cite{bn12}.


\begin{prop}\label{prop1.5}
$$
\left\lceil\frac n{n-\D_1(G)}\right\rceil\leq\varphi^{(k)}(G)\leq
\left\lceil\frac n{n-\Delta(G)}\right\rceil.
$$
\end{prop}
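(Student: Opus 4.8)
The plan is to deduce both inequalities from results already available in the excerpt, so that essentially no new work is needed. For the lower bound $\left\lceil\frac{n}{n-\D_1(G)}\right\rceil\leq\varphi^{(k)}(G)$, I would invoke the chain $\varphi^{(1)}(G)\leq\varphi^{(k)}(G)$ from Proposition~\ref{prop1.4} together with Proposition~\ref{prop1.2}, which states exactly that $\varphi^{(1)}(G)\geq\left\lceil\frac{n}{n-\D_1(G)}\right\rceil$. Composing these two gives the left-hand inequality immediately.

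For the upper bound $\varphi^{(k)}(G)\leq\left\lceil\frac{n}{n-\Delta(G)}\right\rceil$, I would again use Proposition~\ref{prop1.4}, which gives $\varphi^{(k)}(G)\leq\varphi(G)$, and then apply Proposition~\ref{prop1.1}, whose right-hand inequality is $\varphi(G)\leq\left\lceil\frac{n}{n-\Delta(G)}\right\rceil$. Chaining these yields the right-hand inequality. Thus the whole statement follows by sandwiching $\varphi^{(k)}(G)$ between $\varphi^{(1)}(G)$ and $\varphi(G)$ and quoting the known bounds for those two quantities.

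I do not anticipate any real obstacle here; the only point requiring a moment's care is making sure the quantities $n-\D_1(G)$ and $n-\Delta(G)$ are positive so that the ceilings and fractions make sense, but this is automatic for a graph on $n$ vertices (no vertex has degree $n$, and the average degree is strictly below $n$ unless the graph is empty of vertices, a case one may exclude or treat trivially). If one wanted a self-contained argument for the lower bound rather than citing Proposition~\ref{prop1.2}, the natural route would be: in any decomposition of $\V(G)$ into $r$ $\delta_1$-small sets $W_1,\dots,W_r$, one has $\sum_{v\in W_i} d(v)\leq |W_i|(n-|W_i|)\leq \tfrac{n^2}{4}$ — but more usefully, summing the defining inequality $\D_1(W_i)\leq n-|W_i|$ over $i$ after multiplying by $|W_i|$ gives $n\,\D_1(G)=\sum_i\sum_{v\in W_i}d(v)\leq \sum_i |W_i|(n-|W_i|)\leq n(n - n/r)$ by convexity (since $\sum_i|W_i|=n$), which rearranges to $r\geq \frac{n}{n-\D_1(G)}$. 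Since $r$ is an integer this gives the ceiling. However, given that Proposition~\ref{prop1.2} is already stated in the excerpt, the cleanest presentation is simply to cite it.
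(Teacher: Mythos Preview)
Your proposal is correct and matches the paper's own proof essentially line for line: the paper derives the right inequality from Proposition~\ref{prop1.1} and Proposition~\ref{prop1.4}, and the left inequality from Proposition~\ref{prop1.2} and Proposition~\ref{prop1.4}. The additional remarks you make about positivity of the denominators and the alternative convexity argument are fine but unnecessary for the proof as stated.
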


\begin{proof}
The right inequality follows from Proposition~\ref{prop1.1} and
Proposition~\ref{prop1.4}. The left inequality follows from
Proposition~\ref{prop1.2} and Proposition~\ref{prop1.4}.
\end{proof}


\section[Strengthening Proposition \ref{prop1.4}]{Strengthening Proposition \ref{prop1.4}}

\begin{thm}\label{thm2.1}
Let $G$ be a graph. There exists a natural $k_0=k_0(G)$ such that for all
$k\geq k_0$ we have
\begin{enumerate}[{\rm (i)}]
\item
Every $\delta_k$-small set of $G$ is a small set of $G$.
\item
$
\varphi^{(1)}(G)\leq\cdots\leq\varphi^{(k_0)}(G)=\varphi^{(k_0+1)}(G)=
\cdots=\varphi(G).
$
\end{enumerate}
\end{thm}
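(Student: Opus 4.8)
The plan is to establish part (i) directly, since part (ii) is an immediate consequence: once every $\delta_k$-small set is a small set (for $k\geq k_0$), the reverse inclusion from Proposition~\ref{prop1.3}~(i) forces the two families to coincide, hence $\varphi^{(k)}(G)=\varphi(G)$ for all such $k$, and the monotonicity chain of Proposition~\ref{prop1.4} does the rest with $\varphi^{(k_0)}(G)=\varphi(G)$ already. So the whole content lies in (i).

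For (i), the key observation is that for a fixed vertex subset $W$ with $\abs W=m$, the quantity $\D_k(W)=\bigl(\frac1m\sum_{v\in W}d^k(v)\bigr)^{1/k}$ is nondecreasing in $k$ (Proposition~\ref{prop1.3}~(ii), via the power-mean inequality) and converges, as $k\to\infty$, to $\max_{v\in W}d(v)$. Thus if $W$ is \emph{not} a small set, there is a vertex $v_0\in W$ with $d(v_0)>n-m$, i.e. $\max_{v\in W}d(v)\geq n-m+1$; since $\D_k(W)\to\max_{v\in W}d(v)$, for all sufficiently large $k$ we get $\D_k(W)>n-m$, so $W$ fails to be $\delta_k$-small as well. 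Equivalently, contrapositively: for $k$ large enough, $\D_k(W)\leq n-m$ forces $\max_{v\in W}d(v)\leq n-m$, i.e. $W$ is a small set.

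To make this uniform over all subsets $W$ (so that a single $k_0=k_0(G)$ works), I would quantify the convergence. There are only finitely many subsets of $\V(G)$; for each non-small $W$ one can explicitly bound how large $k$ must be. Concretely, if $\max_{v\in W}d(v)=\Delta_W$ and $\abs W=m$, then $\D_k(W)\geq (\Delta_W^k/m)^{1/k}=\Delta_W\, m^{-1/k}$, and this exceeds $n-m$ as soon as $m^{-1/k}>(n-m)/\Delta_W$, i.e. $k>\log m/\log\bigl(\Delta_W/(n-m)\bigr)$ (the denominator is positive precisely because $\Delta_W>n-m\geq 0$ for a non-small set; the degenerate case $n-m\leq 0$ is handled separately, as then $W=\V(G)$ is automatically small). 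Taking $k_0$ to be one more than the maximum of these finitely many thresholds over all non-small $W\subseteq\V(G)$ — equivalently, one can afford the crude bound $k_0=\lceil \log n/\log\frac{\Delta(G)}{\Delta(G)-1}\rceil$ or similar, using $m\leq n$ and $\Delta_W\geq n-m+1$ — gives the desired uniform $k_0(G)$.

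The main obstacle is the uniformity bookkeeping: one must be careful that the bound $\log m/\log(\Delta_W/(n-m))$ is finite and that the supremum over the finitely many non-small sets is attained, handling separately the trivial cases where $n-m\le 0$ (so the $\delta_k$-condition is vacuous and $W$ is small anyway) and where $\Delta_W$ could equal $n-m$ (not possible for a non-small set, by definition, since non-small means \emph{some} degree strictly exceeds $n-m$). Once these edge cases are dispatched, the argument is routine. A slightly slicker alternative avoiding thresholds entirely: since $\D_k(W)$ is integer-comparable (we only ever compare it with the integer $n-m$), and $\D_k(W)\uparrow\max_v d(v)$, for each of the finitely many $W$ there is a least $k_W$ with $\D_{k_W}(W)>n-m$ whenever $W$ is non-small; set $k_0=\max_W k_W$. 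I expect to present the explicit-threshold version since it yields a concrete $k_0(G)$ in terms of $n$ and $\Delta(G)$, which is more useful downstream.
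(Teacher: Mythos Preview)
Your approach is correct and essentially the same as the paper's: both rest on $\lim_{k\to\infty}\D_k(W)=\Delta(W):=\max_{v\in W}d(v)$ together with the finiteness of the family of subsets of $\V(G)$ to obtain a uniform $k_0$, with (ii) then immediate from (i). The paper packages the endgame slightly differently---it picks $k_0$ so that $\D_k(W)\geq\Delta(W)-\tfrac12$ for every $W$ and then invokes the integrality of $\Delta(W)$ and $n-\abs W$---while you extract an explicit threshold from the lower bound $\D_k(W)\geq\Delta_W\,m^{-1/k}$. One small slip to repair: in the degenerate case $n-m=0$ the set $W=\V(G)$ is \emph{not} ``automatically small'' (it is small only when $G$ is edgeless); the case is still trivial, however, since if $G$ has an edge then $\D_k(\V(G))>0=n-n$ for every $k$, so $\V(G)$ is never $\delta_k$-small either and the contrapositive holds.
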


\begin{proof}
Fix a subset of $\V(G)$, say $W$, and let
$\Delta(W)=\max\set{d(v)\mid v\in W}$. Then
$\D_k(W)\leq\Delta(W)$ and $\lim_{k\to\infty}\D_k(W)=\Delta(W)$ (see \cite{hl34}).

Therefore, since $\V(G)$ has only finitely many subsets, there exists $k_0$ such that for arbitrary $W\subseteq\V(G)$
\begin{equation}\label{eq2.1}
\Delta(W)-\frac12\leq\D_k(W),\text{ if }k\geq k_0.
\end{equation}

Let us suppose now that $W$ is a $\delta_k$-small set of $G$ and $k\geq k_0$, i.\,e.
\begin{equation}\label{eq2.2}
\D_k(W)\leq n-\abs W.
\end{equation}
From \eqref{eq2.1} and \eqref{eq2.2} we have that
$$
\Delta(W)-\frac12\leq n-\abs W.
$$
Since $\Delta(W)$ and $n-\abs W$ are integers, from the last inequality we derive
that $\Delta(W)\leq n-\abs W$. From the definition of $\Delta(W)$ it follows
$d(v)\leq n-\abs W$ for all $v\in W$, i.\,e. $W$ is a small set.
Thereby (i) is proven. The statement (ii) obviously follows from (i).
\end{proof}


\section[Lower bounds for $\D_k(G)$ and $\varphi^{(k)}(G)$]
{Lower bounds for $\D_k(G)$ and $\varphi^{(k)}(G)$}

\begin{lem}\label{lm3.1}
Let $\beta_1,\beta_2,\dots,\beta_r\in[0,1]$ and
$\beta_1+\beta_2+\cdots+\beta_r=r-1$. Then for all natural
$k\leq r$ is held the inequality
\begin{equation}\label{eq3.1}
\sum_{i=1}^r(1-\beta_i)\beta_i^k\leq\left(\frac{r-1}r\right)^k.
\end{equation}
\end{lem}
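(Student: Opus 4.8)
The plan is to treat the left-hand side of \eqref{eq3.1} as a function $F(\beta_1,\dots,\beta_r) = \sum_{i=1}^r (1-\beta_i)\beta_i^k$ on the compact convex polytope $P = \{\beta\in[0,1]^r : \sum_i \beta_i = r-1\}$, and to locate its maximum. Since $F$ is continuous and $P$ is compact, a maximum exists; the task is to show it equals $\left(\frac{r-1}{r}\right)^k$, which is the value attained at the symmetric point $\beta_1=\cdots=\beta_r=\frac{r-1}{r}$ (there each summand contributes $\frac1r\cdot\left(\frac{r-1}{r}\right)^k$, and there are $r$ of them). So the real content is that the symmetric point is the global maximizer.

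First I would analyze the single-variable function $g(t) = (1-t)t^k$ on $[0,1]$: it vanishes at the endpoints, is positive in between, and its shape is controlled by $g'(t) = t^{k-1}(k - (k+1)t)$, so $g$ increases on $[0, \frac{k}{k+1}]$ and decreases on $[\frac{k}{k+1},1]$, with a unique interior maximum at $t=\frac{k}{k+1}$. More importantly, I would examine the convexity of $g$: $g''(t) = k t^{k-2}\big((k-1) - (k+1)t\big)$, so $g$ is convex on $[0,\frac{k-1}{k+1}]$ and concave on $[\frac{k-1}{k+1},1]$. The hypothesis $k\le r$ is what makes the average value $\frac{r-1}{r} = 1-\frac1r$ lie in the concave region: indeed $1-\frac1r \ge \frac{k-1}{k+1}$ is equivalent to $\frac{1}{r}\le\frac{2}{k+1}$, i.e. $k+1\le 2r$, which holds since $k\le r$. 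This is the crucial place the hypothesis enters, and it is the step I expect to require the most care in getting the inequality directions exactly right.

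The main argument is then a perturbation/exchange argument at the maximizer. Suppose $\beta^*$ maximizes $F$ on $P$ and is not the symmetric point, so there exist indices $i,j$ with $\beta_i^* \neq \beta_j^*$. Consider moving mass between coordinates $i$ and $j$ while keeping the sum fixed: the restriction of $F$ to this segment is $h(s) = g(\beta_i^*+s) + g(\beta_j^*-s) + \text{const}$, and I want to argue we can increase $h$ by moving toward equalizing $\beta_i^*$ and $\beta_j^*$ — which would contradict maximality unless all coordinates are already equal. The obstruction is that $g$ is not globally concave, so a naive "averaging helps" argument does not apply directly; one must use that at a maximizer the coordinates cannot spread into the convex (low) region of $g$ in a way that beats the symmetric value. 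I would handle this by first showing that at the maximizer every $\beta_i^*$ with $0<\beta_i^*<1$ satisfies the same first-order (Lagrange) condition $g'(\beta_i^*)=\lambda$, so the interior coordinates take at most two values (the two preimages of $\lambda$ under $g'$), and then ruling out the genuinely two-valued configurations by a direct comparison using the location of $\frac{r-1}{r}$ in the concave region established above, together with Jensen's inequality applied to the concave branch. A cleaner alternative I would also consider: induct on $r$, or reduce to the two-variable case by repeatedly replacing a pair $(\beta_i,\beta_j)$ with $(\frac{\beta_i+\beta_j}{2},\frac{\beta_i+\beta_j}{2})$ and showing each such step does not decrease $F$, which works provided the pair averages stay in the concave region — again exactly what $k\le r$ guarantees. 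Either way, once symmetry of the maximizer is forced, evaluating $F$ at $\beta_i=\frac{r-1}{r}$ gives the stated bound $\left(\frac{r-1}{r}\right)^k$.
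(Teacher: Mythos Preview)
Your smoothing argument contains a concrete error. You claim that replacing a pair $(\beta_i,\beta_j)$ by their common average never decreases $F=\sum_i g(\beta_i)$, where $g(t)=(1-t)t^k$, ``provided the pair averages stay in the concave region --- again exactly what $k\le r$ guarantees.'' But $k\le r$ only guarantees that the \emph{global} average $\tfrac{r-1}{r}$ lies in the concave region $[\tfrac{k-1}{k+1},1]$; it says nothing about pairwise averages. Feasibility forces $\beta_i+\beta_j\ge 1$ for every pair, so pair midpoints are only $\ge\tfrac12$, and $\tfrac12\ge\tfrac{k-1}{k+1}$ fails as soon as $k\ge 4$. Worse, even when the midpoint lies in the concave region, Jensen needs concavity on the whole segment $[\beta_i,\beta_j]$, not just at the midpoint. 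A concrete failure: with $k=r=4$ the point $(0.1,0.9,1,1)$ is feasible, and
\[
g(0.1)+g(0.9)=0.00009+0.06561=0.06570>0.0625=2g(0.5),
\]
so averaging the pair $(0.1,0.9)$ \emph{strictly decreases} $F$. Thus the monotone-smoothing route, as you describe it, does not work.

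Your Lagrange-multiplier alternative is a legitimate starting point (interior critical coordinates take at most two values), but the whole difficulty is precisely the step you defer: ruling out the genuinely two-valued critical configurations and the mixed boundary/interior cases. ``A direct comparison using the location of $\tfrac{r-1}{r}$ in the concave region, together with Jensen on the concave branch'' is not an argument; it is exactly the place where a real idea is needed, and you have not supplied one. By contrast, the paper avoids the optimisation on the simplex entirely: writing $S_m=\sum_i\beta_i^m$, the inequality becomes $S_k-S_{k+1}\le\big(\tfrac{r-1}{r}\big)^k$, and power-mean inequalities give both a lower bound $S_k\ge (r-1)^k/r^{k-1}$ and the relation $S_{k+1}\ge r^{-1/k}S_k^{(k+1)/k}$, reducing everything to the one-variable function $f(x)=x-r^{-1/k}x^{(k+1)/k}$, which is decreasing past its critical point; the hypothesis $k\le r-1$ is exactly what places $S_k$ in that decreasing range. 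This sidesteps the multi-valued critical-point analysis completely.
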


\begin{proof}
The case $k=r$ is proven in \cite{bcan}.
That's why we suppose that $k\leq r-1$. For all natural $n$ we
define
$$
S_n=\beta_1^n+\beta_2^n+\cdots+\beta_r^n.
$$
We can rewrite the inequality \eqref{eq3.1} in following way
\begin{equation}\label{eq3.2}
S_k-S_{k+1}\leq\left(\frac{r-1}r\right)^k.
\end{equation}
Since
$$
\frac{r-1}r=\frac{S_1}r\leq\sqrt[k]{\frac{S_k}r}\leq
\sqrt[k+1]{\frac{S_{k+1}}r}\quad\text{(cf. \cite{HL, kh90})},
$$
we have
\begin{equation}\label{eq3.3}
S_{k+1}\geq\frac1{\sqrt[k]r}S_k^{\frac{k+1}k}
\end{equation}
and
\begin{equation}\label{eq3.4}
S_k\geq\frac{(r-1)^k}{r^{k-1}}.
\end{equation}
From \eqref{eq3.3} we see that
\begin{equation}\label{eq3.5}
S_k-S_{k+1}\leq S_k-\frac1{\sqrt[k]r}S_k^{\frac{k+1}k}.
\end{equation}
We consider the function
$$
f(x)=x-\frac1{\sqrt[k]r}x^{\frac{k+1}k},\quad x>0.
$$
According to \eqref{eq3.2} and \eqref{eq3.5} it is sufficient to prove that
$$
f(S_k)\leq\left(\frac{r-1}r\right)^k.
$$
From $f'(x)=1-\frac{k+1}{k\sqrt[k]r}x^{\frac1k}$, it follows that $f'(x)$ has unique
positive root
$$
x_0=\frac{rk^k}{(k+1)^k}
$$
and $f(x)$ decreases in $[x_0,\infty)$. According to \eqref{eq3.4},
$S_k\geq\frac{(r-1)^k}{r^{k-1}}$. Since $k\leq r-1$,
$\frac{(r-1)^k}{r^{k-1}}\geq x_0$. Therefore
$$
f(S_k)\leq f\left(\frac{(r-1)^k}{r^{k-1}}\right)=\left(\frac{r-1}r\right)^k.
\qedhere
$$
\end{proof}

\begin{thm}\label{thm3.2}
Let $G$ be an $n$-vertex graph and
$$
\V(G)=V_1\cup V_2\cup\dots\cup V_r,\quad
V_i\cap V_j=\emptyset,\quad i\neq j,
$$
where $V_i$ are $\delta_k$-small sets. Then for all
natural $k\leq r$ the following inequalities are satisfied
\begin{enumerate}[{\rm (i)}]
\item
$\D_k(G)\leq\frac{n(r-1)}r$;
\item
$r\geq\frac n{n-\D_k(G)}$.
\end{enumerate}
\end{thm}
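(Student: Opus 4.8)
The plan is to deduce (i) from Lemma~\ref{lm3.1} by a change of variables, and then read off (ii) from (i) by an elementary rearrangement. Write $n_i=\abs{V_i}$ and set
$$
\beta_i=\frac{n-n_i}{n},\qquad i=1,\dots,r.
$$
Since $0\le n_i\le n$ we have $\beta_i\in[0,1]$, and since $n_1+\cdots+n_r=n$ we get $\beta_1+\cdots+\beta_r=r-1$; thus $\beta_1,\dots,\beta_r$ satisfy the hypotheses of Lemma~\ref{lm3.1}, and as we are assuming $k\le r$ the lemma is applicable. (An empty $V_i$ contributes nothing to the estimates below, so we may assume all $V_i\neq\emptyset$.)

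Next I would exploit that each $V_i$ is a $\delta_k$-small set, i.e. $\D_k(V_i)\le n-n_i=n\beta_i$. Raising this to the $k$-th power and using $n_i=n(1-\beta_i)$,
$$
\sum_{v\in V_i}d^k(v)=n_i\big(\D_k(V_i)\big)^k\le n(1-\beta_i)\,(n\beta_i)^k=n^{k+1}(1-\beta_i)\beta_i^k .
$$
Summing over $i=1,\dots,r$ and applying \eqref{eq3.1} from Lemma~\ref{lm3.1},
$$
\sum_{v\in\V(G)}d^k(v)=\sum_{i=1}^r\sum_{v\in V_i}d^k(v)\le n^{k+1}\sum_{i=1}^r(1-\beta_i)\beta_i^k\le n^{k+1}\left(\frac{r-1}r\right)^k .
$$
Dividing by $n$ and taking $k$-th roots yields $\D_k(G)\le\frac{n(r-1)}r$, which is (i).

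For (ii), multiplying (i) by $r$ gives $r\,\D_k(G)\le n(r-1)=nr-n$, hence $n\le r\big(n-\D_k(G)\big)$. By (i) we also have $\D_k(G)\le\frac{n(r-1)}r<n$, so $n-\D_k(G)>0$, and dividing the previous inequality by it gives $r\ge\frac n{n-\D_k(G)}$.

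I do not expect a genuine obstacle here: the theorem reduces to Lemma~\ref{lm3.1} as soon as one hits on the substitution $\beta_i=1-\abs{V_i}/n$, which is designed precisely so that the size constraint $\sum\abs{V_i}=n$ becomes $\sum\beta_i=r-1$ and the $\delta_k$-small inequalities $\D_k(V_i)\le n-\abs{V_i}$ turn into the summands of \eqref{eq3.1}. The only things requiring care are checking $\beta_i\in[0,1]$ with sum exactly $r-1$ (so that Lemma~\ref{lm3.1} genuinely applies, remembering $k\le r$) and noting $n-\D_k(G)>0$ so the final division in (ii) is legitimate.
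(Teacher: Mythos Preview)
Your proof is correct and is essentially the same as the paper's: the substitution $\beta_i=1-n_i/n$, the bound $\sum_{v\in V_i}d^k(v)\le n_i(n-n_i)^k$ from the $\delta_k$-small condition, and the appeal to Lemma~\ref{lm3.1} are exactly what the paper does, with (ii) obtained from (i) by rearrangement. Your write-up is in fact more careful than the paper's (you verify $\beta_i\in[0,1]$, $\sum\beta_i=r-1$, and $n-\D_k(G)>0$ explicitly), but the argument is the same.
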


\begin{proof}
Let $n_i=\abs{V_i}$, $i=1,2,\dots,r$. Then
$$
\sum_{v\in\V(G)}d^k(v)=\sum_{i=1}^r\sum_{v\in V_i}d^k(v)\leq
\sum_{i=1}^rn_i(n-n_i)^k.
$$
Let $\beta_i=1-\frac{n_i}n$, $i=1,2,\dots,r$. Then
$$
\sum_{v\in\V(G)}d^k(v)\leq
n^{k+1}\sum_{i=1}^r\beta_i(1-\beta_i)^k,\quad k\geq r.
$$
The inequality (i) follows from the last inequality and Lemma~\ref{lm3.1}.
Solving the inequality (i) for $r$, we derive the inequality (ii).
\end{proof}

\section[Some corollaries from Theorem \ref{thm3.2}]
{Some corollaries from Theorem \ref{thm3.2}}

\begin{cor}\label{cor4.1}
Let $G$ be an $n$-vertex graph and let $k$ and $s$ be natural numbers such that
$k\leq\varphi^{(s)}(G)$. Then
\begin{enumerate}[{\rm (i)}]
\item
$\D_k(G)\leq\frac{\big(\varphi^{(s)}(G)-1\big)n}{\varphi^{(s)}(G)}\leq
\frac{\big(\varphi(G)-1\big)n}{\varphi(G)}\leq
\frac{(\omega(G)-1)n}{\omega(G)}\leq
\frac{(\chi(G)-1)n}{\chi(G)};
$
\item
$
\varphi^{(s)}(G)\geq\frac n{n-\D_k(G)}.
$
\end{enumerate}
\end{cor}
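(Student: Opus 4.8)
The plan is to deduce Corollary~\ref{cor4.1} directly from Theorem~\ref{thm3.2} by applying it to a carefully chosen decomposition. First I would fix a minimum decomposition of $\V(G)$ into $\varphi^{(s)}(G)$ $\delta_s$-small sets, say $\V(G)=V_1\cup\dots\cup V_{\varphi^{(s)}(G)}$. By Proposition~\ref{prop1.3}~(ii) (applied repeatedly), every $\delta_s$-small set is a $\delta_k$-small set whenever $k\geq s$; but for the corollary we need this for the given $k$ with $k\leq\varphi^{(s)}(G)$, and $k$ need not exceed $s$. So the cleaner route is to observe that, since $k\leq\varphi^{(s)}(G)=:r$, Theorem~\ref{thm3.2} applies to \emph{any} decomposition of $\V(G)$ into $r$ $\delta_k$-small sets. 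To get such a decomposition I would instead start from a minimum decomposition into $\varphi^{(k)}(G)$ $\delta_k$-small sets; but then the relevant inequality would involve $\varphi^{(k)}(G)$, not $\varphi^{(s)}(G)$.

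Reconciling this is the crux. The key point is Proposition~\ref{prop1.4}, which gives $\varphi^{(k)}(G)\leq\varphi^{(s)}(G)$ when $k\leq s$, and more generally the monotone chain $\varphi^{(1)}\leq\varphi^{(2)}\leq\dots\leq\varphi(G)\leq\omega(G)\leq\chi(G)$. However the hypothesis is $k\leq\varphi^{(s)}(G)$, an inequality between $k$ and a $\varphi$-value, not between two indices. So I would argue as follows: take a minimum decomposition into $r=\varphi^{(s)}(G)$ $\delta_s$-small sets $V_1,\dots,V_r$. Each $V_i$, being $\delta_s$-small, satisfies $\D_s(V_i)\leq n-|V_i|$; I want each $V_i$ to be $\delta_k$-small, i.e. $\D_k(V_i)\leq n-|V_i|$. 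If $k\leq s$ this is immediate from the inequality $\D_k(V_i)\leq\D_s(V_i)$ used in Proposition~\ref{prop1.3}. If $k>s$ it need not hold for this particular decomposition, but then $k>s\geq 1$ forces us back to comparing with $\varphi^{(k)}$: actually when $k>s$ we have $\varphi^{(s)}(G)\leq\varphi^{(k)}(G)$, so from $k\leq\varphi^{(s)}(G)$ we get $k\leq\varphi^{(k)}(G)$, and a minimum decomposition into $\varphi^{(k)}(G)$ $\delta_k$-small sets has $r=\varphi^{(k)}(G)\geq k$ parts, so Theorem~\ref{thm3.2} applies and yields $\D_k(G)\leq(r-1)n/r=(\varphi^{(k)}(G)-1)n/\varphi^{(k)}(G)\leq(\varphi^{(s)}(G)-1)n/\varphi^{(s)}(G)$, the last step by monotonicity of $t\mapsto(t-1)/t$ together with $\varphi^{(k)}(G)\leq\varphi^{(s)}(G)$. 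In the case $k\leq s$ we directly get a decomposition into $r=\varphi^{(s)}(G)\geq k$ $\delta_k$-small sets, and Theorem~\ref{thm3.2}~(i) gives $\D_k(G)\leq(\varphi^{(s)}(G)-1)n/\varphi^{(s)}(G)$ at once.

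Having established $\D_k(G)\leq(\varphi^{(s)}(G)-1)n/\varphi^{(s)}(G)$ in both cases, the remaining inequalities in (i) are just the monotonicity of $f(t)=\frac{(t-1)n}{t}=n-\frac{n}{t}$, which is increasing in $t$, combined with the chain $\varphi^{(s)}(G)\leq\varphi(G)\leq\omega(G)\leq\chi(G)$ from Proposition~\ref{prop1.4}. For part (ii), I would simply solve the inequality $\D_k(G)\leq\frac{(r-1)n}{r}$ with $r=\varphi^{(s)}(G)$ for $r$ — exactly as in the passage from (i) to (ii) in Theorem~\ref{thm3.2} — to obtain $\varphi^{(s)}(G)\geq\frac{n}{n-\D_k(G)}$; this is legitimate since $\D_k(G)<n$ (every vertex has degree at most $n-1$), so $n-\D_k(G)>0$ and the division preserves the inequality.

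The main obstacle is purely bookkeeping: making sure the decomposition we feed into Theorem~\ref{thm3.2} consists of $\delta_k$-small sets and has at least $k$ parts, given only the hypothesis $k\leq\varphi^{(s)}(G)$. Splitting on whether $k\leq s$ or $k>s$ and using Proposition~\ref{prop1.3}~(ii) respectively Proposition~\ref{prop1.4} handles it; no genuinely new estimate is needed beyond Lemma~\ref{lm3.1}, which is already absorbed into Theorem~\ref{thm3.2}.
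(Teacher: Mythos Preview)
Your case $k\leq s$ is correct and is essentially the paper's argument: take a minimal decomposition of $\V(G)$ into $r=\varphi^{(s)}(G)$ $\delta_s$-small sets, observe via the power-mean inequality $\D_k\leq\D_s$ that each part is also $\delta_k$-small, and apply Theorem~\ref{thm3.2} with this $r\geq k$. The paper's proof simply writes ``where $V_i$ are $\delta_k$-small sets'' without spelling out why such a decomposition into exactly $\varphi^{(s)}(G)$ parts exists; you have supplied the missing justification.

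Your case $k>s$, however, contains a genuine error. You correctly deduce $\varphi^{(s)}(G)\leq\varphi^{(k)}(G)$ from Proposition~\ref{prop1.4}, but then in the final step you invoke ``$\varphi^{(k)}(G)\leq\varphi^{(s)}(G)$'' in order to pass from $(\varphi^{(k)}(G)-1)n/\varphi^{(k)}(G)$ down to $(\varphi^{(s)}(G)-1)n/\varphi^{(s)}(G)$. That is the \emph{reverse} of what you established one line earlier, and since $t\mapsto (t-1)/t$ is increasing, the bound Theorem~\ref{thm3.2} gives with $r=\varphi^{(k)}(G)\geq\varphi^{(s)}(G)$ is weaker than the one you want, not stronger. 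So this branch does not close, and it is not clear how to repair it: once $k>s$, a $\delta_s$-small set need not be $\delta_k$-small, and there is no reason $\V(G)$ should decompose into only $\varphi^{(s)}(G)$ $\delta_k$-small sets. The paper's own (very terse) proof does not treat this case separately either; as written it only makes sense when the $\varphi^{(s)}$-decomposition already consists of $\delta_k$-small sets, i.e.\ when $k\leq s$. In every later application of Corollary~\ref{cor4.1} in the paper one has $k\leq s$ (or can choose $s\geq k$), so in practice only your first case is needed.
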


\begin{proof}
Let $\varphi^{(s)}(G)=r$ and $\V(G)=V_1\cup V_2\cup\dots\cup V_r$,
$V_i\cap V_j=\emptyset$, where $V_i$ are $\delta_k$-small sets.
Then the left inequality in (i) follows from Theorem~\ref{thm3.2}~{(i)}.
The other inequalities in (i) follow from the inequalities
$\varphi^{(s)}(G)\leq\varphi(G)\leq\omega(G)\leq\chi(G)$. The inequality (ii)
follows from Theorem~\ref{thm3.2}~{(ii)}.
\end{proof}

\begin{rem}\label{rem4.1}
In the case $k=s=1$, Corollary \ref{cor4.1} is proven in \cite{bcan}
(cf. Theorem 6.3 (i) and Theorem 6.2 (ii)).
\end{rem}

\begin{cor}\label{cor4.2}
Let $G$ be an $n$-vertex graph. Then for all natural $s\geq2$,
$$
\varphi^{(s)}(G)\geq\frac n{n-\D_2(G)}.
$$
\end{cor}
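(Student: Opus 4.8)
The plan is to deduce the bound from Corollary~\ref{cor4.1} after reducing to the case $s=2$. By Proposition~\ref{prop1.4} we have $\varphi^{(2)}(G)\le\varphi^{(3)}(G)\le\cdots$, so $\varphi^{(s)}(G)\ge\varphi^{(2)}(G)$ for every $s\ge 2$; hence it suffices to prove the single inequality $\varphi^{(2)}(G)\ge\frac n{n-\D_2(G)}$. (The right-hand side is well defined since $\D_2(G)\le\Delta(G)\le n-1$.)

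For this I would invoke Corollary~\ref{cor4.1}~(ii) with $s=k=2$. The only thing to verify is its hypothesis $k\le\varphi^{(s)}(G)$, i.e. $\varphi^{(2)}(G)\ge 2$. Whenever this holds, Corollary~\ref{cor4.1}~(ii) gives exactly $\varphi^{(2)}(G)\ge\frac n{n-\D_2(G)}$, and combining this with $\varphi^{(s)}(G)\ge\varphi^{(2)}(G)$ completes the argument.

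The remaining case $\varphi^{(2)}(G)=1$, where the hypothesis of Corollary~\ref{cor4.1} fails, has to be handled by hand, and this is the only real point of care. If $\varphi^{(2)}(G)=1$ then $\V(G)$ is itself a $\delta_2$-small set, so $\D_2(G)\le n-\abs{\V(G)}=0$; since $\D_2(G)\ge 0$ this forces $\D_2(G)=0$, and then $\frac n{n-\D_2(G)}=1=\varphi^{(2)}(G)\le\varphi^{(s)}(G)$, so the inequality holds in this case too. Beyond this bookkeeping I do not anticipate any obstacle, since all the substance is already contained in Corollary~\ref{cor4.1} and Proposition~\ref{prop1.4}.
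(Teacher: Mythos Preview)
Your argument is correct and essentially the same as the paper's: both split off the degenerate case $\varphi^{(2)}(G)=1$ (where $\D_2(G)=0$, equivalently $G=\overline{K}_n$) and otherwise apply Corollary~\ref{cor4.1}~(ii) with $k=2$ using $\varphi^{(s)}(G)\ge\varphi^{(2)}(G)\ge2$. The only cosmetic difference is that you first reduce to $s=2$ and then invoke Corollary~\ref{cor4.1}, whereas the paper applies Corollary~\ref{cor4.1}~(ii) directly with the given $s$ after checking $\varphi^{(s)}(G)\ge2$.
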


\begin{proof}
If $\varphi^{(2)}(G)=1$ then ${\rm E}(G)=\emptyset$, i.\,e.
$G=\overline{K}_n$ and the inequality is obvious.
If $\varphi^{(2)}(G)\geq2$ then $\varphi^{(s)}(G)\geq2$
because $s\geq2$. Therefore Corollary~\ref{cor4.2} follows
from Corollary~\ref{cor4.1}~{(ii)}.
\end{proof}

\begin{cor}[\cite{bn12}]\label{cor4.3}
For every $n$-vertex graph
$$
\varphi(G)\geq\frac n{n-\D_2(G)}.
$$
\end{cor}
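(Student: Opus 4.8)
The plan is to obtain Corollary~\ref{cor4.3} as a direct specialization of Corollary~\ref{cor4.2}. The key observation is that $\varphi(G)$ is itself one of the numbers $\varphi^{(s)}(G)$: by Theorem~\ref{thm2.1}~(ii) there is a $k_0=k_0(G)$ with $\varphi^{(k_0)}(G)=\varphi(G)$, and in particular we may take $s$ to be any natural number $\geq\max\{2,k_0\}$, so that $\varphi^{(s)}(G)=\varphi(G)$ and $s\geq 2$.

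First I would invoke Corollary~\ref{cor4.2} with this choice of $s$: since $s\geq 2$, it gives
$$
\varphi(G)=\varphi^{(s)}(G)\geq\frac{n}{n-\D_2(G)},
$$
which is exactly the claimed inequality. Alternatively, and perhaps more transparently, one can argue directly from Corollary~\ref{cor4.1}~(ii): if $\varphi(G)\geq 2$ then taking $k=2$ is legitimate because $k=2\leq\varphi(G)=\varphi^{(s)}(G)$ for $s$ chosen as above, and (ii) yields $\varphi^{(s)}(G)\geq n/(n-\D_2(G))$; while if $\varphi(G)=1$ then $G=\overline{K}_n$, every degree is $0$, so $\D_2(G)=0$ and the right-hand side is $n/n=1=\varphi(G)$.

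There is essentially no obstacle here; the only point requiring a moment's care is the degenerate case, which is handled exactly as in the proof of Corollary~\ref{cor4.2}. It is worth noting explicitly why $k=2$ is the right exponent to single out: Corollary~\ref{cor4.1} requires $k\leq\varphi^{(s)}(G)$, and the smallest interesting value of $\varphi(G)$ is $2$, so $\D_2$ is the largest power-mean degree for which the bound is guaranteed to apply to every graph with at least one edge. This is precisely why the statement is phrased in terms of $\D_2$ rather than a higher $\D_k$, and it explains the $s\geq 2$ hypothesis in Corollary~\ref{cor4.2}.
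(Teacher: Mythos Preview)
Your proof is correct and takes essentially the same route as the paper, namely deriving the bound from Corollary~\ref{cor4.2}. The paper's argument is one step shorter: instead of invoking Theorem~\ref{thm2.1} to realize $\varphi(G)$ as some $\varphi^{(s)}(G)$, it simply fixes any $s\geq 2$ and uses the inequality $\varphi(G)\geq\varphi^{(s)}(G)$ from Proposition~\ref{prop1.4}.
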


\begin{proof}
This inequality follows from Corollary \ref{cor4.2} because
$\varphi^{(s)}(G)\leq\varphi(G)$.
\end{proof}

\begin{cor}[\cite{bcan}]\label{cor4.4}
Let $G$ be an $n$-vertex graph. Then for every natural $k\leq\varphi(G)$
$$
\varphi(G)\geq\frac n{n-\D_k(G)}.
$$
\end{cor}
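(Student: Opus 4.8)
The plan is to deduce this directly from Corollary~\ref{cor4.1}~(ii) together with the stabilization result Theorem~\ref{thm2.1}~(ii). Corollary~\ref{cor4.1}~(ii) already asserts $\varphi^{(s)}(G)\geq\frac n{n-\D_k(G)}$ whenever $k\leq\varphi^{(s)}(G)$, so the only work is to arrange that the superscript $s$ can be chosen so that $\varphi^{(s)}(G)$ equals $\varphi(G)$ and that the hypothesis $k\leq\varphi^{(s)}(G)$ is automatically met for the given $k\leq\varphi(G)$.

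First I would invoke Theorem~\ref{thm2.1}~(ii): there is a natural number $k_0=k_0(G)$ with $\varphi^{(k_0)}(G)=\varphi(G)$. Set $s=k_0$. Then the hypothesis $k\leq\varphi(G)$ of the corollary becomes $k\leq\varphi^{(s)}(G)$, which is precisely what is needed to apply Corollary~\ref{cor4.1}~(ii) with this choice of $s$. Applying it yields $\varphi^{(s)}(G)\geq\frac n{n-\D_k(G)}$, and since $\varphi^{(s)}(G)=\varphi(G)$ by the choice of $s$, we conclude $\varphi(G)\geq\frac n{n-\D_k(G)}$, as claimed.

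There is essentially no genuine obstacle here; the only subtlety is that Corollary~\ref{cor4.1}~(ii) is stated with the condition $k\leq\varphi^{(s)}(G)$ rather than $k\leq\varphi(G)$, and since $\varphi^{(s)}(G)$ may be strictly smaller than $\varphi(G)$ for small $s$ (recall Proposition~\ref{prop1.4}), one must pass to a sufficiently large $s$ via Theorem~\ref{thm2.1}~(ii) before the corollary becomes applicable. Once that choice is made, the inequality is immediate.
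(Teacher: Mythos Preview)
Your argument is correct and is essentially identical to the paper's own proof: the paper also invokes Theorem~\ref{thm2.1} to obtain an $s$ with $\varphi^{(s)}(G)=\varphi(G)$ and then applies Corollary~\ref{cor4.1}~(ii) with this $s$.
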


\begin{proof}
According to Theorem \ref{thm2.1} there exists a natural number $s$ such that
$\varphi(G)=\varphi^{(s)}(G)$. Since $k\leq\varphi^{(s)}(G)$ from
Corollary~\ref{cor4.1}~{(ii)} we derive
$$
\varphi(G)=\varphi^{(s)}(G)\geq\frac n{n-\D_k(G)}.
$$
\end{proof}

\begin{cor}\label{cor4.5}
Let $G$ be an $n$-vertex graph. Then for every natural $s\geq3$
$$
\varphi^{(s)}(G)\geq\frac n{n-\D_3(G)}.
$$
\end{cor}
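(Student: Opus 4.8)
The plan is to mimic exactly the argument used for Corollary~\ref{cor4.2}, only replacing $\D_2$ by $\D_3$ and $2$ by $3$ in the threshold for $s$. The engine is Corollary~\ref{cor4.1}~(ii), which gives $\varphi^{(s)}(G)\geq\frac{n}{n-\D_k(G)}$ whenever $k\leq\varphi^{(s)}(G)$; here we want to apply it with $k=3$, so we need to guarantee $\varphi^{(s)}(G)\geq3$. Thus the whole proof reduces to a short case analysis on the size of $\varphi^{(3)}(G)$, together with the monotonicity $\varphi^{(3)}(G)\leq\varphi^{(s)}(G)$ for $s\geq3$ coming from Proposition~\ref{prop1.4}.

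First I would dispose of the degenerate cases. If $\varphi^{(3)}(G)=1$, then $\V(G)$ is a single $\delta_3$-small set, which forces $\D_3(G)\leq n-n=0$, hence every vertex has degree $0$ and $G=\overline{K}_n$; then the right-hand side is $\frac{n}{n-0}=1$ and the inequality holds. If $\varphi^{(3)}(G)=2$, I would argue that this case in fact cannot contribute a worse bound than what we want, or — more in the spirit of the paper — simply note that the statement is only claimed for $s\geq3$, and handle $\varphi^{(3)}(G)=2$ by the same $\overline{K}_n$-type reasoning: a decomposition into two $\delta_3$-small sets $V_1,V_2$ of sizes $n_1,n_2$ gives (via the computation in the proof of Theorem~\ref{thm3.2} with $r=2$, which is legitimate since $k=3>2=r$ is \emph{not} the hypothesis — here $k=3$, $r=2$, so $k>r$) a bound on $\D_3(G)$; one checks the resulting inequality directly. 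The cleanest route, paralleling Corollary~\ref{cor4.2}, is: if $\varphi^{(3)}(G)\geq3$ then for $s\geq3$ we have $\varphi^{(s)}(G)\geq\varphi^{(3)}(G)\geq3\geq k$ with $k=3$, so Corollary~\ref{cor4.1}~(ii) applies verbatim and yields $\varphi^{(s)}(G)\geq\frac{n}{n-\D_3(G)}$.

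The only real obstacle is the case $\varphi^{(3)}(G)=2$, where $k=3$ exceeds $r=\varphi^{(3)}(G)=2$ and so Theorem~\ref{thm3.2} (which requires $k\leq r$) is not directly available. I expect the resolution to be a direct elementary estimate: with $\V(G)=V_1\cup V_2$, $\abs{V_i}=n_i$, and each $V_i$ a $\delta_3$-small set, one has $\sum_{v\in\V(G)}d^3(v)\leq n_1(n-n_1)^3+n_2(n-n_2)^3$, and after substituting $n_2=n-n_1$ and optimizing over $n_1\in\{1,\dots,n-1\}$ one obtains $\D_3(G)\leq\frac{n}{2}$, which is exactly the bound $\frac{(r-1)n}{r}$ for $r=2$; this gives $\frac{n}{n-\D_3(G)}\leq 2=\varphi^{(3)}(G)\leq\varphi^{(s)}(G)$, completing this case. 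So the structure is: (a) $\varphi^{(3)}(G)=1$ trivial; (b) $\varphi^{(3)}(G)=2$ by a one-line direct estimate on the two-set decomposition; (c) $\varphi^{(3)}(G)\geq3$ by Corollary~\ref{cor4.1}~(ii) with $k=3$ and Proposition~\ref{prop1.4}.
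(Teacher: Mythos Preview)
Your proposal is correct and follows essentially the same route as the paper: reduce to $\varphi^{(3)}(G)$ via Proposition~\ref{prop1.4}, handle $\varphi^{(3)}(G)\geq3$ by Corollary~\ref{cor4.1}~(ii), dispose of $\varphi^{(3)}(G)=1$ trivially, and for $\varphi^{(3)}(G)=2$ use the direct estimate $\sum d^3(v)\leq n_1(n-n_1)^3+n_2(n-n_2)^3$ to get $\D_3(G)\leq n/2$. The paper writes this last bound via the identity $n_1(n-n_1)^3+n_2(n-n_2)^3=n_1n_2(n^2-2n_1n_2)\leq n^4/8$, which is exactly the optimization you describe.
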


\begin{proof}
Since $s\geq3$, $\varphi^{(s)}(G)\geq\varphi^{(3)}(G)$. Therefore
it is sufficient to prove the inequality
\begin{equation}\label{eq4.1}
\varphi^{(3)}(G)\geq\frac n{n-\D_3(G)}.
\end{equation}
If $\varphi^{(3)}(G)\geq3$ then \eqref{eq4.1} follows from
Corollary~\ref{cor4.1}~{(ii)}. If $\varphi^{(3)}(G)=1$ then
the inequality \eqref{eq4.1} is obvious because $\D_3(G)=0$.
Let $\varphi^{(3)}(G)=2$ and $\V(G)=V_1\cup V_2$, where $V_i$,
$i=1$, 2 are $\delta_3$-small sets. Let $n_i=\abs{V_i}$, $i=1$, 2. Then
\begin{multline}\label{eq4.2}
\sum_{v\in\V(G)}d^3(v)=
\sum_{v\in V_1}d^3(v)+\sum_{v\in V_2}d^3(v)\leq\\
n_1(n-n_1)^3+n_2(n-n_2)^3=n_1n_2(n^2-2n_1n_2)\leq\frac{n^4}8.
\end{multline}
Therefore $\D_3(G)\leq\frac n2$ and we obtain
$$
\frac n{n-\D_3(G)}\leq2=\varphi^{(3)}(G).
$$
\end{proof}

Since $\varphi(G)\geq\varphi^{(3)}(G)$ from Corollary~\ref{cor4.5} we derive

\begin{cor}[\cite{bcan}]\label{cor4.6}
For every $n$-vertex graph $G$
$$
\varphi(G)\geq\frac n{n-\D_3(G)}.
$$
\end{cor}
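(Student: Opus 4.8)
The plan is to read this off directly from the two facts already in hand: the monotonicity chain $\varphi^{(k)}(G)\leq\varphi(G)$ supplied by Proposition~\ref{prop1.4}, and the lower bound supplied by Corollary~\ref{cor4.5}. First I would specialize Corollary~\ref{cor4.5} to $s=3$, the smallest index it covers, which gives $\varphi^{(3)}(G)\geq\frac{n}{n-\D_3(G)}$. Then I would invoke $\varphi^{(3)}(G)\leq\varphi(G)$ from Proposition~\ref{prop1.4} to transport this bound to $\varphi(G)$, concluding $\varphi(G)\geq\frac{n}{n-\D_3(G)}$. This is exactly the pattern by which Corollary~\ref{cor4.3} was deduced from Corollary~\ref{cor4.2}.

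Before chaining the inequalities I would note that the right-hand side is well defined: since every vertex has degree at most $n-1$, we have $\D_3(G)\leq n-1<n$, so $n-\D_3(G)>0$ and the fraction is a positive real. In the degenerate case $G=\overline{K}_n$ we have $\D_3(G)=0$ and the asserted bound is just $\varphi(G)\geq1$, which holds trivially; this case is in any event already subsumed by Corollary~\ref{cor4.5}.

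The honest remark is that there is no real obstacle left at this stage: all the genuine work has been done inside Corollary~\ref{cor4.5}, whose proof separately handles the delicate case $\varphi^{(3)}(G)=2$ through the explicit estimate $n_1(n-n_1)^3+n_2(n-n_2)^3\leq n^4/8$, yielding $\D_3(G)\leq n/2$, and the case $\varphi^{(3)}(G)\geq3$ through Theorem~\ref{thm3.2}\,(ii). Consequently the present corollary is a two-line deduction, and the only thing I would be careful about is citing the correct specialization ($s=3$) of Corollary~\ref{cor4.5} and the correct link ($\varphi^{(3)}(G)\leq\varphi(G)$) from Proposition~\ref{prop1.4}.
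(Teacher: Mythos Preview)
Your proposal is correct and matches the paper's own derivation: the paper likewise deduces Corollary~\ref{cor4.6} immediately from Corollary~\ref{cor4.5} together with the inequality $\varphi(G)\geq\varphi^{(3)}(G)$. There is nothing to add.
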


\begin{cor}\label{cor4.7}
Let $G$ be an $n$-vertex graph and $\varphi^{(4)}(G)\neq2$.
Then for every natural $s\geq4$,
$$
\varphi^{(s)}(G)\geq\frac n{n-\D_4(G)}.
$$
\end{cor}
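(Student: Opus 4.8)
The plan is to reduce everything, as in the proofs of Corollaries~\ref{cor4.2} and~\ref{cor4.5}, to the single inequality $\varphi^{(4)}(G)\geq\frac n{n-\D_4(G)}$; since $s\geq4$ gives $\varphi^{(s)}(G)\geq\varphi^{(4)}(G)$ by Proposition~\ref{prop1.4}, this suffices for all $s\geq4$. I would then split into cases according to $r:=\varphi^{(4)}(G)$. If $r\geq4$, then $k=s=4$ satisfies $k\leq\varphi^{(4)}(G)$, so Corollary~\ref{cor4.1}~(ii) applies directly. If $r=1$, then $\mathrm{E}(G)=\emptyset$, so $\D_4(G)=0$ and both sides equal $1$. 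The value $r=2$ is the one excluded by hypothesis.

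That exclusion is essential. For a partition $\V(G)=V_1\cup V_2$ into $\delta_4$-small sets with $n_i=\abs{V_i}$, the method of~\eqref{eq4.2} yields only
$$
\sum_{v\in\V(G)}d^4(v)\leq n_1(n-n_1)^4+n_2(n-n_2)^4=n\,n_1n_2\big(n^2-3n_1n_2\big),
$$
whose right-hand side attains $\frac{n^5}{12}$ at $n_1n_2=\frac{n^2}6$, strictly larger than $\frac{n^5}{16}=n\big(\tfrac n2\big)^4$; so one cannot force $\D_4(G)\leq\frac n2$. (Equivalently, the $r=2$, $k=4$ instance of Lemma~\ref{lm3.1} is false, failing already at $\beta_1=\tfrac23$, $\beta_2=\tfrac13$.)

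So the substantive case is $r=3$. Here I would fix a partition $\V(G)=V_1\cup V_2\cup V_3$ into $\delta_4$-small sets, set $n_i=\abs{V_i}$ (so $n_1+n_2+n_3=n$ and $\sum_{v\in V_i}d^4(v)\leq n_i(n-n_i)^4$), and reduce to the polynomial inequality
$$
\sum_{i=1}^3 n_i(n-n_i)^4\leq\frac{16\,n^5}{81},
$$
which forces $\D_4(G)\leq\frac{2n}3$, hence $\frac n{n-\D_4(G)}\leq3=\varphi^{(4)}(G)$. With $\beta_i=1-\frac{n_i}n\in[0,1]$ (so $\beta_1+\beta_2+\beta_3=2$, which makes $\beta_i\geq0$ automatic) this is exactly $\sum_{i=1}^3(1-\beta_i)\beta_i^4\leq\left(\tfrac23\right)^4$, i.e.\ the $r=3$, $k=4$ instance of Lemma~\ref{lm3.1}, which lies just outside the range $k\leq r$ treated there. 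Establishing it is the step I expect to be the main obstacle.

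For this I would argue by compactness. Put $g(t)=(1-t)t^4$ and $F(\beta)=\sum_{i=1}^3 g(\beta_i)$ on the closed triangle $T=\set{\beta\in[0,1]^3:\beta_1+\beta_2+\beta_3=2}$, whose vertices are the permutations of $(1,1,0)$ and whose centroid is $\big(\tfrac23,\tfrac23,\tfrac23\big)$. On the boundary of $T$ some coordinate equals $1$ (contributing $g(1)=0$), so there $F(\beta)\leq 2\max_{[0,1]}g=2g(\tfrac45)=\tfrac{512}{3125}<\left(\tfrac23\right)^4$. At an interior critical point the Lagrange condition gives $g'(\beta_1)=g'(\beta_2)=g'(\beta_3)$; since $g'(t)=t^3(4-5t)$ increases on $[0,\tfrac35]$ and decreases on $[\tfrac35,1]$, it is at most $2$-to-$1$ on $(0,1)$, so two of the $\beta_i$ coincide and the problem collapses to maximizing $\phi(a)=2g(a)+g(2-2a)$ over $a\in[\tfrac12,1]$ --- a routine one-variable calculation whose only interior critical point is $a=\tfrac23$, with $\phi(\tfrac23)=\left(\tfrac23\right)^4$ (and $\phi(\tfrac12)=\tfrac1{16}$, $\phi(1)=0$). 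Hence $\max_T F=\left(\tfrac23\right)^4$, with equality only when $n_1=n_2=n_3$, which is exactly the bound needed. (A smoothing argument using the convexity of $g$ on $[0,\tfrac35]$ and its concavity on $[\tfrac35,1]$ is an alternative route.)
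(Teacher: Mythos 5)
Your proof is correct, and its skeleton coincides with the paper's: reduce to $s=4$ via $\varphi^{(s)}(G)\geq\varphi^{(4)}(G)$, dispose of $\varphi^{(4)}(G)\geq4$ by Corollary~\ref{cor4.1}~(ii) and of $\varphi^{(4)}(G)=1$ trivially, and in the remaining case $\varphi^{(4)}(G)=3$ reduce, via $\beta_i=1-\frac{n_i}n$, to the inequality $\sum_{i=1}^3(1-\beta_i)\beta_i^4\leq\left(\frac23\right)^4$. The one substantive difference is how that inequality is handled: the paper simply cites it from the proof of Theorem~5.4~(iii) in \cite{bcan}, whereas you prove it from scratch by a compactness/Lagrange argument (boundary estimate $2\max_{[0,1]}(1-t)t^4=\frac{512}{3125}<\frac{16}{81}$, then the $2$-to-$1$ property of $g'(t)=t^3(4-5t)$ forcing two coordinates equal, then a one-variable maximization at $a=\frac23$). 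I checked the numerics and the symmetrization step; they are sound, though the claim that $a=\frac23$ is the only interior critical point of $\phi$ deserves the short verification you wave at as ``routine.'' Your proof therefore makes the corollary self-contained where the paper is not. You also add something the paper omits entirely: a demonstration that the hypothesis $\varphi^{(4)}(G)\neq2$ is not an artifact of the method but genuinely necessary for this line of attack, since $n_1(n-n_1)^4+n_2(n-n_2)^4=n\,n_1n_2(n^2-3n_1n_2)$ can reach $\frac{n^5}{12}>\frac{n^5}{16}$, so the two-set partition cannot force $\D_4(G)\leq\frac n2$ (compare Remark~\ref{rem4.2}). Both additions are worthwhile; nothing in your argument conflicts with the paper's.
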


\begin{proof}
Since $\varphi^{(s)}(G)\geq\varphi^{(4)}(G)$ for $s\geq4$, it
sufficient to prove the inequality
\begin{equation}\label{eq4.3}
\varphi^{(4)}(G)\geq\frac n{n-\D_4(G)}.
\end{equation}

If $\varphi^{(4)}(G)\geq4$ the inequality \eqref{eq4.3} follows
from Corollary~\ref{cor4.1}~{(ii)}. If $\varphi^{(4)}(G)=1$ the
inequality \eqref{eq4.3} is obvious because $\D_4(G)=0$. It remains to
consider the case $\varphi^{(4)}(G)=3$.
Let $\V(G)=V_1\cup V_2\cup V_3$, where $V_i$,
are $\delta_4$-small sets and let $n_i=\abs{V_i}$, $i=1$, 2, 3. Then
\begin{multline}
\sum_{v\in\V(G)}d^4(v)=
\sum_{v\in V_1}d^4(v)+\sum_{v\in V_2}d^4(v)+\sum_{v\in V_3}d^4(v)\leq\\
n_1(n-n_1)^4+n_2(n-n_2)^4+n_3(n-n_3)^4.
\end{multline}
Denoting $\beta_i=1-\frac{n_i}n$, $i=1$, 2, 3 we receive
$$
\sum_{v\in\V(G)}d^4(v)\leq n^4\left(\sum_{i=1}^3(1-\beta_i)\beta_i^4\right).
$$
Since $\sum_{i=1}^3(1-\beta_i)\beta_i^4\leq\frac23$
(see the proof of Theorem~{5.4 (iii)} in \cite{bcan}) we take
$$
\D_4(G)\leq\frac23=\frac{\varphi^{(4)}(G)-1}{\varphi^{(4)}(G)}.
$$
Solving the last equation for $\varphi^{(4)}(G)$ we obtain \eqref{eq4.3}.
\end{proof}

\begin{cor}\label{cor4.8}
Let $G$ be an $n$-vertex graph and $\varphi^{(4)}(G)\neq2$. Then
\begin{equation}\label{eq4.3}
\varphi(G)\geq\frac n{n-\D_4(G)}.
\end{equation}
\end{cor}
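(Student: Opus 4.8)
The plan is to obtain the asserted bound \eqref{eq4.3} as an immediate consequence of Corollary~\ref{cor4.7} together with the monotonicity chain of Proposition~\ref{prop1.4}, without any new computation. First I would record that Proposition~\ref{prop1.4} gives $\varphi^{(s)}(G)\leq\varphi(G)$ for every natural $s$, and in particular $\varphi^{(4)}(G)\leq\varphi(G)$. Next, since by hypothesis $\varphi^{(4)}(G)\neq2$, Corollary~\ref{cor4.7} applies (take $s=4$ there) and yields $\varphi^{(4)}(G)\geq\frac n{n-\D_4(G)}$. Chaining the two inequalities gives $\varphi(G)\geq\varphi^{(4)}(G)\geq\frac n{n-\D_4(G)}$, which is precisely \eqref{eq4.3}.

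An alternative route, parallel to the proofs of Corollaries~\ref{cor4.3}, \ref{cor4.4} and \ref{cor4.6}, is to pass through Theorem~\ref{thm2.1}: there is a natural $s_0$ with $\varphi(G)=\varphi^{(s_0)}(G)$, and one may plainly assume $s_0\geq4$; applying Corollary~\ref{cor4.7} to that $s_0$ gives the conclusion directly. Either version works; the first is marginally shorter since it uses nothing beyond Corollary~\ref{cor4.7} and the trivial bound $\varphi^{(4)}(G)\leq\varphi(G)$, so that is the one I would write down.

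I do not expect any real obstacle here: the substantive content — the case analysis splitting $\varphi^{(4)}(G)\in\{1,3\}$ from $\varphi^{(4)}(G)\geq4$, and the estimate $\sum_{i=1}^3(1-\beta_i)\beta_i^4\leq\frac23$ borrowed from \cite{bcan} — has already been dispatched in the proof of Corollary~\ref{cor4.7}. The single point worth flagging in the write-up is that the hypothesis $\varphi^{(4)}(G)\neq2$ is essential and is inherited verbatim from Corollary~\ref{cor4.7}: for $r=2$ the defining relation $k\leq r$ of Lemma~\ref{lm3.1} fails at $k=4$, so the key estimate behind Corollary~\ref{cor4.1}~{(ii)} is unavailable, and that is exactly why the excluded case cannot be absorbed into this corollary.
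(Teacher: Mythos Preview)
Your proposal is correct and matches the paper's approach: the paper states Corollary~\ref{cor4.8} without proof, leaving it as an immediate consequence of Corollary~\ref{cor4.7} together with $\varphi(G)\geq\varphi^{(4)}(G)$, exactly as in your first route (and parallel to the one-line derivation of Corollary~\ref{cor4.6} from Corollary~\ref{cor4.5}). Your commentary on why the hypothesis $\varphi^{(4)}(G)\neq2$ is needed is accurate and a nice addition.
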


\begin{rem}\label{rem4.2}
In \cite{bcan} it is proven that the inequlity \eqref{eq4.3} is held
if  $\varphi(G)\neq2$.
\end{rem}


\section[Maximal $\delta_k$-sets]{Maximal $\delta_k$-sets}

We denote the maximal number of vertices in a $\delta_k$-set of $G$ by $\alpha^{(k)}(G)$.
$S(G)$ is the maximal number of vertices of small sets of $G$. From
Proposition~\ref{prop1.3} is easy to see that the next proposition holds.

\begin{prop}\label{prop5.1}
For every graph $G$
$$
\alpha^{(1)}(G)\geq\alpha^{(2)}(G)\geq\cdots\geq\alpha^{(k)}(G)\geq
\cdots\geq S(G)\geq\alpha(G).
$$
\end{prop}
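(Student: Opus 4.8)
The plan is to verify the chain one link at a time, in each case exhibiting, for the larger of the two quantities, a witnessing set of the appropriate size; by Proposition~\ref{prop1.3} this is essentially bookkeeping. Two preliminary remarks: $\V(G)$ has only finitely many subsets, and every singleton $\set v$ is a small set (indeed $d(v)\le n-1=n-\abs{\set v}$), so each of the families of sets in question is nonempty, each of the displayed quantities is a well-defined positive integer, and each is attained by some subset of $\V(G)$. Also recall from Proposition~\ref{prop1.3} that a small set of $G$ is a $\delta_k$-small set for every natural $k$, and that a $\delta_k$-small set of $G$ is a $\delta_{k-1}$-small set (both being consequences of $\D_{k-1}(W)\le\D_k(W)\le\max_{v\in W}d(v)$).

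Given this, the interior inequalities follow immediately. If $W$ is a $\delta_k$-small set with $\abs W=\alpha^{(k)}(G)$, then $W$ is also $\delta_{k-1}$-small, so $\alpha^{(k-1)}(G)\ge\abs W=\alpha^{(k)}(G)$; iterating gives $\alpha^{(1)}(G)\ge\alpha^{(2)}(G)\ge\dots\ge\alpha^{(k)}(G)\ge\dots$. Likewise, if $W$ is a small set with $\abs W=S(G)$, then $W$ is $\delta_k$-small, so $\alpha^{(k)}(G)\ge\abs W=S(G)$ for every $k$.

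It remains to check $S(G)\ge\alpha(G)$. Let $I\subseteq\V(G)$ be a maximum independent set, so $\abs I=\alpha(G)$. Every neighbour of a vertex $v\in I$ lies outside $I$, hence $d(v)\le n-\abs I$ for all $v\in I$, which is exactly the condition in Definition~\ref{defn1.1}; thus $I$ is a small set and $S(G)\ge\abs I=\alpha(G)$. Concatenating the two chains completes the proof. The only point that is not pure unwinding of definitions is this last observation --- that a maximum independent set automatically satisfies the degree bound $d(v)\le n-\abs W$ --- and it too is a one-liner, so I anticipate no real difficulty.
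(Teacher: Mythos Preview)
Your argument is correct and is exactly the approach the paper takes: it simply says the proposition follows from Proposition~\ref{prop1.3}, and you have spelled out precisely how each inequality is obtained from the containments of set families there. The only addition you supply is the explicit verification that a maximum independent set is small (hence $S(G)\ge\alpha(G)$), which the paper leaves implicit.
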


\begin{rem}
Note that $\alpha^{(1)}(G)$ is denoted in \cite{bcan} by $S^\alpha(G)$.
\end{rem}

From Theorem \ref{thm2.1} we have

\begin{thm}\label{thm5.2}
For every graph $G$ there exists an unique number $k_0=k_0(G)$ such that
$$
\alpha^{(1)}(G)\geq\alpha^{(2)}(G)\geq\cdots\geq\alpha^{(k_0)}(G)=
\alpha^{(k_0+1)}(G)\cdots=S(G).
$$
\end{thm}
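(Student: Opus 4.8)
The plan is to read off the statement from two facts already established: the monotonicity in Proposition~\ref{prop5.1} and the eventual coincidence of $\delta_k$-small sets with small sets in Theorem~\ref{thm2.1}. First, by Proposition~\ref{prop5.1} the sequence $\big(\alpha^{(k)}(G)\big)_{k\geq1}$ is non-increasing and $\alpha^{(k)}(G)\geq S(G)$ for every natural $k$. Next, Theorem~\ref{thm2.1}~(i) supplies a natural number $k_1$ such that for every $k\geq k_1$ each $\delta_k$-small set of $G$ is already a small set of $G$; in particular a $\delta_k$-small set of maximum size is then a small set, so $\alpha^{(k)}(G)\leq S(G)$, and combined with the previous inequality this gives $\alpha^{(k)}(G)=S(G)$ for all $k\geq k_1$.

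Now set $k_0=k_0(G)$ to be the least natural number $k$ with $\alpha^{(k)}(G)=S(G)$; such $k$ exists by the paragraph above, so $k_0$ is well defined and unique. For any $k\geq k_0$, monotonicity gives $S(G)\leq\alpha^{(k)}(G)\leq\alpha^{(k_0)}(G)=S(G)$, hence $\alpha^{(k)}(G)=S(G)$; for $k<k_0$ the minimality of $k_0$ forces $\alpha^{(k)}(G)\neq S(G)$, i.e.\ $\alpha^{(k)}(G)>S(G)$. Feeding this into Proposition~\ref{prop5.1} yields exactly the displayed chain
$$\alpha^{(1)}(G)\geq\alpha^{(2)}(G)\geq\cdots\geq\alpha^{(k_0)}(G)=\alpha^{(k_0+1)}(G)=\cdots=S(G),$$
and $k_0$ is the only index at which the sequence first reaches its stable value $S(G)$, which is the asserted uniqueness.

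No serious obstacle is expected: the argument is pure bookkeeping on top of Proposition~\ref{prop5.1} and Theorem~\ref{thm2.1}. The one step that deserves a word of justification is the implication ``every $\delta_k$-small set is small $\Rightarrow \alpha^{(k)}(G)\leq S(G)$'', which is immediate once one notes that an extremal $\delta_k$-small set is then among the small sets counted by $S(G)$.
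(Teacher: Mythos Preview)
Your proof is correct and follows exactly the route the paper indicates: the paper merely writes ``From Theorem~\ref{thm2.1} we have'' before stating Theorem~\ref{thm5.2}, and you have spelled out how Theorem~\ref{thm2.1}(i) together with Proposition~\ref{prop5.1} forces the sequence to stabilise at $S(G)$, including a sensible reading of the uniqueness assertion as minimality of $k_0$.
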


\begin{prop}\label{prop5.3}
Let $\V(G)=\set{v_1,v_2,\dots,v_n}$ and
$d(v_1)\leq d(v_2)\leq\cdots\leq d(v_n)$. Then
\begin{equation*}
\begin{split}
\alpha^{(k)}(G)&{}=\max\set{s\mid\D_k(\set{v_1,v_2,\dots v_s})\leq n-s}=\\
&{}=\max\set{s\mid\set{v_1,v_2,\dots v_s}\text{\rm\ is $\delta_k$-small set in $G$}}.\\
\end{split}
\end{equation*}
\end{prop}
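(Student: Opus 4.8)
The plan is to show the single nontrivial equality
$$
\alpha^{(k)}(G)=\max\set{s\mid\D_k(\set{v_1,\dots,v_s})\leq n-s},
$$
since the second displayed equality is merely a restatement of the definition of a $\delta_k$-small set applied to the specific set $\set{v_1,\dots,v_s}$. Denote the right-hand side by $s^*$. The inequality $\alpha^{(k)}(G)\geq s^*$ is immediate: the set $\set{v_1,\dots,v_{s^*}}$ has $s^*$ vertices and is $\delta_k$-small by the defining property of $s^*$, so the maximum number of vertices in a $\delta_k$-small set is at least $s^*$.

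The reverse inequality $\alpha^{(k)}(G)\leq s^*$ is the heart of the matter. Let $W$ be a $\delta_k$-small set of $G$ with $\abs W=\alpha^{(k)}(G)=:m$, so $\D_k(W)\leq n-m$. I would compare $W$ with the ``initial segment'' $W_0=\set{v_1,\dots,v_m}$ consisting of the $m$ vertices of smallest degree. The key observation is that since the $v_i$ are ordered by nondecreasing degree, replacing the degrees of the vertices of $W$ by the $m$ smallest degrees $d(v_1),\dots,d(v_m)$ can only decrease (or leave unchanged) each term $d^k(\cdot)$ in the sum $\sum_{v\in W}d^k(v)$; more precisely, if one lists the degrees occurring in $W$ in nondecreasing order, each is at least the corresponding $d(v_i)$. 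Hence
$$
\sum_{i=1}^m d^k(v_i)\leq\sum_{v\in W}d^k(v),
$$
and dividing by $m$ and taking $k$-th roots gives $\D_k(W_0)\leq\D_k(W)\leq n-m$. Thus $W_0=\set{v_1,\dots,v_m}$ is itself a $\delta_k$-small set, which by the definition of $s^*$ forces $m\leq s^*$. Combining the two inequalities yields $\alpha^{(k)}(G)=s^*$, and the proof is complete.

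The main obstacle — really the only point requiring care — is justifying the degree-comparison inequality $\sum_{i=1}^m d^k(v_i)\leq\sum_{v\in W}d^k(v)$. This is a standard rearrangement/majorization fact: among all $m$-element subsets of $\V(G)$, the sum of $k$-th powers of degrees is minimized by taking the $m$ vertices of smallest degree. One establishes it by a simple exchange argument — if $W$ contains some $v_j$ with $j>m$, then $W$ omits some $v_i$ with $i\leq m$ (since $\abs W=m$), and swapping $v_j$ out for $v_i$ does not increase the sum because $d(v_i)\leq d(v_j)$ implies $d^k(v_i)\leq d^k(v_j)$; iterating transforms $W$ into $W_0$ without ever increasing $\sum d^k$. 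Everything else is bookkeeping with the definition of $\D_k$ and monotonicity of $t\mapsto t^{1/k}$ on $[0,\infty)$.
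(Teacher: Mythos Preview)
Your proof is correct and follows essentially the same approach as the paper's own argument: both show the easy inequality $\alpha^{(k)}(G)\geq s^*$ directly, then take a maximal $\delta_k$-small set $W$ of size $m=\alpha^{(k)}(G)$ and use the fact that $\D_k(\{v_1,\dots,v_m\})\leq\D_k(W)$ to conclude that the initial segment of the same size is also $\delta_k$-small, whence $m\leq s^*$. The only difference is that you spell out the exchange argument justifying the degree-comparison inequality, which the paper simply asserts.
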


\begin{proof}
Let $s_0=\max\set{s\mid\set{v_1,v_2,\dots v_s}\text{ is $\delta_k$-small set in $G$}}$.
Then $s_0\leq\alpha^{(k)}(G)$. Let $\alpha^{(k)}(G)=r$ and let
$\set{v_{i_1},v_{i_2},\dots,v_{i_r}}$ be a $\delta_k$-small set.
Since $\D_k(\set{v_1,v_2,\dots,v_r})\leq\D_k(\set{v_{i_1},v_{i_2},\dots,v_{i_r}})$
it follows that $\set{v_1,v_2,\dots,v_r}$ is $\delta_k$-small set too.
Therefore $\alpha^{(k)}(G)=r\leq s_0$.
\end{proof}

\begin{prop}\label{prop5.4}
For every natural $k$ are held the inequlities
$$
n-\Delta(G)\leq\alpha^{(k)}(G)\leq n-\delta(G).
$$
\end{prop}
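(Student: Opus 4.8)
The plan is to establish the two inequalities separately, each by exhibiting or analyzing a suitable $\delta_k$-small set. For the lower bound $n-\Delta(G)\le\alpha^{(k)}(G)$, the idea is to produce an explicit $\delta_k$-small set of size exactly $n-\Delta(G)$. Order the vertices $v_1,\dots,v_n$ so that $d(v_1)\le d(v_2)\le\cdots\le d(v_n)$ as in Proposition~\ref{prop5.3}, and take $W=\{v_1,\dots,v_s\}$ with $s=n-\Delta(G)$. Every vertex $v\in W$ satisfies $d(v)\le d(v_n)=\Delta(G)=n-s=n-\abs W$, so $W$ is in fact a small set; by Proposition~\ref{prop1.3}~(i) it is a $\delta_k$-small set for every natural $k$. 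Hence $\alpha^{(k)}(G)\ge s=n-\Delta(G)$. (One should note the degenerate case $\Delta(G)=n-1$, where $s=1$ and $W=\{v_1\}$ still works since $d(v_1)\le\Delta(G)=n-1$.)

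For the upper bound $\alpha^{(k)}(G)\le n-\delta(G)$, suppose $W$ is any $\delta_k$-small set and let $s=\abs W$. Since $d(v)\ge\delta(G)$ for every $v\in W$, the power-mean $\D_k(W)=\sqrt[k]{\frac{1}{s}\sum_{v\in W}d^k(v)}$ is bounded below by $\delta(G)$. Combining this with the defining inequality $\D_k(W)\le n-s$ gives $\delta(G)\le n-s$, i.e. $s\le n-\delta(G)$. Taking the maximum over all $\delta_k$-small sets $W$ yields $\alpha^{(k)}(G)\le n-\delta(G)$.

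Neither direction presents a genuine obstacle; the argument is essentially a matching pair of one-line estimates using the elementary bounds $\delta(G)\le\D_k(W)\le\Delta(W)\le\Delta(G)$ on the $k$-th power mean of the degrees in $W$. The only point requiring a moment's care is the boundary behaviour: one must check that $n-\Delta(G)\ge 1$ (which holds since $\Delta(G)\le n-1$ for a simple graph), so that the set $W$ in the lower-bound argument is nonempty and the notion of $\delta_k$-small set applies to it. With this checked, chaining the two inequalities completes the proof.
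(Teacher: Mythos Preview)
Your proof is correct and follows essentially the same route as the paper: the upper bound is the same power-mean estimate $\delta(G)\le\D_k(W)\le n-\abs W$, and for the lower bound the paper invokes $S(G)\ge n-\Delta(G)$ from \cite{bcan} together with Proposition~\ref{prop5.1}, which amounts to exactly the explicit small set you construct.
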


\begin{proof}
The left inequality follows from the inequality $S(G)\geq n-\Delta(G)$
from \cite{bcan} and Proposition~\ref{prop5.1}. Let $r=\alpha^{(k)}(G)$.
According to Proposition~\ref{prop5.3}, $\set{v_1,v_2,\dots,v_r}$
is a $\delta_k$-small set. So
$$
\delta(G)=d(v_1)\leq\D_k(\set{v_1,v_2,\dots,v_r})\leq n-r=n-\alpha^{(k)}(G),
$$
hence $\alpha^{(k)}(G)\leq n-\delta(G)$.
\end{proof}

\begin{rem}\label{rem5.1}
The inequality $\alpha(G)\geq n-\Delta(G)$ is not always true.
For example, $\alpha(C_5)<5-\Delta(C_5)=3$.
\end{rem}

\begin{thm}\label{thm5.5}
Let $A\subseteq\V(G)$ be a $\delta_1$-small set of $G$ and
$s=\D_1(\V(G)\setminus A)$. Then
\begin{equation}\label{eq5.1}
\abs A\leq
\left\lfloor
\frac{n-s}2+\sqrt{\frac{(n-s)^2}4+ns-2e(G)}
\right\rfloor
\end{equation}
\end{thm}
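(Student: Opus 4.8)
The plan is to unwind the definition of a $\delta_1$-small set directly and then estimate the number of edges of $G$ from below. Write $a=\abs A$ and let $B=\V(G)\setminus A$, so $\abs B=n-a$. By definition of a $\delta_1$-small set, $\D_1(A)\leq n-a$, which means $\sum_{v\in A}d(v)\leq a(n-a)$. On the other hand, by the definition of $s$ we have $\sum_{v\in B}d(v)=s(n-a)$. Adding these and using $\sum_{v\in\V(G)}d(v)=2e(G)$, we get
$$
2e(G)=\sum_{v\in A}d(v)+\sum_{v\in B}d(v)\leq a(n-a)+s(n-a).
$$
This is already linear, not quadratic, in $a$, so it alone will not produce the bound in \eqref{eq5.1}; the point of the theorem is that one can do better by not throwing away the internal structure of $B$.

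The key extra input is that the degrees counted in $\sum_{v\in B}d(v)$ include edges with both endpoints in $B$, which are double-counted, whereas the bound $\sum_{v\in A}d(v)\leq a(n-a)$ should really be compared against edges incident to $A$. I would split $2e(G)$ as (edges incident to $A$, counted with multiplicity in $\sum_{v\in A}d(v)$) plus (edges inside $B$, counted twice) plus (edges from $A$ to $B$ counted once more on the $B$ side). More precisely, let $e(B)$ be the number of edges inside $B$; then $\sum_{v\in B}d(v)=2e(B)+e(A,B)$ where $e(A,B)$ is the number of $A$–$B$ edges, and $\sum_{v\in A}d(v)=2e(A)+e(A,B)$. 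Hence
$$
2e(G)=2e(A)+2e(B)+2e(A,B)=\sum_{v\in A}d(v)+2e(B)+e(A,B)\geq\text{(something)}.
$$
The honest route is the reverse: bound $e(B)\le\binom{n-a}{2}$ and $e(A,B)\le a(n-a)$, and note $\sum_{v\in B}d(v)=s(n-a)$ forces $e(A,B)=s(n-a)-2e(B)\ge s(n-a)-(n-a)(n-a-1)$. Combining these with $e(A)=e(G)-e(B)-e(A,B)$ and the constraint $\sum_{v\in A}d(v)=2e(A)+e(A,B)\le a(n-a)$ yields an inequality in which the $\binom{n-a}{2}$ term contributes a $-a^2$ after the dust settles, producing a genuine quadratic in $a$.

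Once the quadratic inequality $a^2-(n-s)a+\bigl(2e(G)-ns\bigr)\le 0$ (or whatever the precise coefficients turn out to be after carefully tracking the edge counts above) is in hand, the conclusion is immediate: solving for $a$ by the quadratic formula gives
$$
a\le\frac{n-s}{2}+\sqrt{\frac{(n-s)^2}{4}+ns-2e(G)},
$$
and since $a=\abs A$ is an integer we may replace the right-hand side by its floor, which is exactly \eqref{eq5.1}. The main obstacle is the bookkeeping in the middle paragraph: getting the edge-count identity $\sum_{v\in B}d(v)=2e(B)+e(A,B)$ to interact correctly with both the $\delta_1$-smallness constraint on $A$ and the trivial bound $e(B)\le\binom{n-a}{2}$, so that the resulting inequality is quadratic with the precise coefficients $n-s$ and $ns-2e(G)$. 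I expect one also needs $ns-2e(G)\le(n-s)^2/4$ (equivalently that the discriminant is nonnegative and that the floor is well-defined as a real bound), which should follow from the same inequalities, but this is the point that needs care.
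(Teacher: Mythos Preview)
Your very first displayed inequality
\[
2e(G)\le a(n-a)+s(n-a)
\]
is already the whole proof; you simply misread it as linear in $a$. Expanding $a(n-a)=na-a^{2}$ gives
\[
2e(G)\le -a^{2}+(n-s)a+ns,
\]
i.e.\ exactly the quadratic $a^{2}-(n-s)a+\bigl(2e(G)-ns\bigr)\le 0$ that you later say you are hoping to extract from the edge-count bookkeeping. Solving it yields the stated bound, and since $a$ is an integer you may take the floor. This is precisely the paper's argument.

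The entire middle paragraph (splitting into $e(A)$, $e(B)$, $e(A,B)$ and invoking $e(B)\le\binom{n-a}{2}$) is therefore unnecessary; it neither improves the bound nor is needed to produce a quadratic. Your worry about nonnegativity of the discriminant is also automatically handled: the quadratic in $a$ opens upward, so if it is $\le 0$ at the integer $a=\abs A$ then it has real roots, hence $\dfrac{(n-s)^{2}}{4}+ns-2e(G)\ge 0$.
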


\begin{proof}
$$
2e(G)=\sum_{v\in\V(G)}d(v)=\sum_{v\in A}d(v)+\sum_{v\in\V(G)\setminus A}d(v)
\leq\abs A(n-\abs A)+s(n-\abs A).
$$
Solving the derived quadric inequality for $\abs A$ we obtain the inequality~\ref{eq5.1}.
\end{proof}

\begin{cor}[\cite{bcan}]\label{cor5.6}
For every number $k$
\begin{equation}\label{eq5.2}
\begin{split}
\alpha^{(k)}(G)&{}\leq
\left\lfloor
\frac{n-\Delta(G)}2+\sqrt{\frac{(n-\Delta(G))^2}4+n\Delta(G)-2e(G)}
\right\rfloor\leq\\
&{}\leq\left\lfloor
\frac12+\sqrt{\frac14+n^2-n-2e(G)}
\right\rfloor.
\end{split}
\end{equation}
\end{cor}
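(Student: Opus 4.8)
The plan is to bound $\alpha^{(k)}(G)$ by $\alpha^{(1)}(G)$ via Proposition~\ref{prop5.1}, to bound $\alpha^{(1)}(G)$ by applying Theorem~\ref{thm5.5} to a largest $\delta_1$-small set, and then to replace the auxiliary parameter $s$ in that bound first by $\Delta(G)$ and then by $n-1$. The content of the last step is that the right-hand side of \eqref{eq5.1} is non-decreasing in $s$; that monotonicity is the only part of the argument that is not routine.

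First I would take a $\delta_1$-small set $A$ with $\abs A=\alpha^{(1)}(G)$. If $A=\V(G)$ then $\D_1(G)\le n-n=0$, forcing $e(G)=0$ and $\Delta(G)=0$, so both expressions on the right of \eqref{eq5.2} equal $n$ and the corollary holds because $\alpha^{(k)}(G)\le n$. So assume $A\neq\V(G)$ and put $s=\D_1\big(\V(G)\setminus A\big)$, which is well defined and, being an average of vertex degrees, satisfies $s\le\Delta(G)\le n-1$. Theorem~\ref{thm5.5} then gives $\alpha^{(1)}(G)=\abs A\le\lfloor g(s)\rfloor$, where $g(t):=\frac{n-t}2+\sqrt{\frac{(n-t)^2}4+nt-2e(G)}$.

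The key step is to show that $g$ is non-decreasing on $[s,n-1]$. Using the identity $\frac{(n-t)^2}4+nt=\frac{(n+t)^2}4$ one has $g(t)=\frac{n-t}2+\sqrt{\frac{(n+t)^2}4-2e(G)}$, with the radicand nonnegative for $t\ge s$ (for $t=s$ this is built into Theorem~\ref{thm5.5}, and it is seen directly from $2e(G)\le(n-\abs A)(\abs A+s)\le\frac{(n+s)^2}4$ by the AM--GM inequality). For $s\le t_1\le t_2$, multiplying $\sqrt{\frac{(n+t_2)^2}4-2e(G)}-\sqrt{\frac{(n+t_1)^2}4-2e(G)}$ by its conjugate and using $\sqrt{\frac{(n+t_i)^2}4-2e(G)}\le\frac{n+t_i}2$ shows this difference is at least $\frac{t_2-t_1}2$, whence $g(t_2)\ge g(t_1)$; the estimate $\sqrt{\frac{(n+t_i)^2}4-2e(G)}\le\frac{n+t_i}2$ is precisely where $e(G)\ge0$ is used. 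Since $\lfloor\cdot\rfloor$ is non-decreasing and $s\le\Delta(G)\le n-1$,
\[
\alpha^{(k)}(G)\le\alpha^{(1)}(G)\le\lfloor g(s)\rfloor\le\lfloor g(\Delta(G))\rfloor\le\lfloor g(n-1)\rfloor .
\]
Finally, $g(\Delta(G))$ is by direct substitution the first bracketed expression in \eqref{eq5.2}, and $g(n-1)=\frac12+\sqrt{\frac14+n^2-n-2e(G)}$ is the second, which completes the proof. (Alternatively one can avoid $g$ and simply re-solve the inequality $2e(G)\le(n-\abs A)(\abs A+s)$ from the proof of Theorem~\ref{thm5.5} after replacing $s$ by $\Delta(G)$ and then by $n-1$, both substitutions being legitimate since $n-\abs A\ge0$ and they only weaken that inequality; but verifying monotonicity of the closed-form bound seems the more transparent route.)
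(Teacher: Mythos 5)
Your proof is correct and follows essentially the same route as the paper: reduce to $k=1$ via Proposition~\ref{prop5.1}, apply Theorem~\ref{thm5.5} to a maximum $\delta_1$-small set, and use monotonicity in $s$ together with $s\leq\Delta(G)\leq n-1$. The only difference is that you actually verify the monotonicity claim (via the identity $\frac{(n-t)^2}4+nt=\frac{(n+t)^2}4$ and the conjugate trick) and handle the degenerate case $A=\V(G)$, both of which the paper simply glosses over.
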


\begin{proof}
According to Proposition \ref{prop5.1}, it is sufficient to prove \eqref{eq5.2}
only in the case $k=1$. Let $A$ be a maximal $\delta_1$-small set, i.\,e.
$\abs A=\alpha^{(1)}(G)$, and $s=\D_1\big(\V(G)\setminus A\big)$.
According to Theorem~\ref{thm5.5} the inequality \eqref{eq5.1} holds.
Since the right side of \eqref{eq5.1} is an increasing function for $s$
and $s\leq\Delta(G)\leq n-1$, the inequalities \eqref{eq5.2} follows from \eqref{eq5.1}.
\end{proof}

\section{$\alpha$-small sets}

\begin{defn}[\cite{bcan}]\label{defn6.1}
Let $G$ be an $n$-vertex graph and let $W\subseteq\V(G)$. We say that $W$ is
an $\alpha$-small set if
$$
\sum_{v\in W}\frac1{n-d(v)}\leq1.
$$
We denote the smallest natural number $r$ for which $\V(G)$ decomposes into $r$ $\alpha$-small
sets by $\varphi^\alpha(G)$.
\end{defn}

The idea for $\alpha$-small sets is coming from the following Caro-Wey inequality
(\cite{caro} and \cite{wei})
$$
\omega(G)\geq\sum_{v\in\V(G)}\frac1{n-d(v)}.
$$

We have the proposition

\begin{prop}[\cite{bcan}]\label{prop6.1}
$$
\varphi^{(1)}(G)\leq\varphi^\alpha(G)\leq\varphi(G).
$$
\end{prop}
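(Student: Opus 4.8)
The plan is to deduce both inequalities from two inclusions between the three families of sets involved: (a) every small set of $G$ is an $\alpha$-small set, and (b) every $\alpha$-small set of $G$ is a $\delta_1$-small set. Once these are in hand, the proposition is immediate: a partition of $\V(G)$ into $\varphi(G)$ small sets is, by (a), also a partition into $\alpha$-small sets, so $\varphi^\alpha(G)\leq\varphi(G)$; and a partition into $\varphi^\alpha(G)$ $\alpha$-small sets is, by (b), a partition into $\delta_1$-small sets, so $\varphi^{(1)}(G)\leq\varphi^\alpha(G)$.

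To prove (a), I would start from a small set $W$, so that $d(v)\leq n-\abs W$ for each $v\in W$. Then $n-d(v)\geq\abs W>0$ for every $v\in W$, hence $\frac1{n-d(v)}\leq\frac1{\abs W}$, and summing over the $\abs W$ vertices of $W$ gives $\sum_{v\in W}\frac1{n-d(v)}\leq1$, i.e. $W$ is $\alpha$-small.

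For (b), let $W$ be an $\alpha$-small set, put $m=\abs W$ and $a_v=n-d(v)$ for $v\in W$; since $G$ is simple, $a_v\geq1>0$. The hypothesis is $\sum_{v\in W}\frac1{a_v}\leq1$. The key step is the arithmetic--harmonic mean inequality (equivalently Cauchy--Schwarz) for the positive reals $a_v$, which gives $\frac1m\sum_{v\in W}a_v\geq\frac{m}{\sum_{v\in W}1/a_v}\geq m$, hence $\sum_{v\in W}a_v\geq m^2$. Then $\sum_{v\in W}d(v)=mn-\sum_{v\in W}a_v\leq mn-m^2=m(n-m)$, which is exactly $\D_1(W)\leq n-\abs W$, so $W$ is a $\delta_1$-small set.

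The argument is short, and the only substantive ingredient is the arithmetic--harmonic mean inequality used in (b); step (a) and the rearrangement in (b) are routine bookkeeping. I would also note in passing that combining (a) and (b) reproves the inequality $\varphi^{(1)}(G)\leq\varphi(G)$ of Proposition~\ref{prop1.4}.
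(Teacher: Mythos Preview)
Your proof is correct. The two inclusions (a) and (b) are exactly the right way to obtain the chain $\varphi^{(1)}(G)\leq\varphi^\alpha(G)\leq\varphi(G)$, and both arguments are valid: (a) is immediate from the definitions, and (b) follows cleanly from the AM--HM inequality applied to the positive quantities $n-d(v)$.

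As for comparison with the paper: the paper does not supply a proof of this proposition at all. It is stated with a citation to \cite{bcan} and left unproved here, so there is no in-paper argument to compare your approach against. Your proof is self-contained and would serve perfectly well as the omitted justification.
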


The following problem is inspirited from Proposition \ref{prop6.1} and Theorem \ref{thm2.1}.

{\bf Problem.} Is it true that for every graph $G$ there exists natural number
$k_0=k_0(G)$ such that $\varphi^{(\alpha)}(G)=\varphi^{(k_0)}(G)$?



\end{document}